\documentclass{siamart1116}

\usepackage{amsmath,amssymb}
\usepackage{cite}
\usepackage{mathtools}
\usepackage{tikz}
\usetikzlibrary{patterns}
\usepackage{pgfplots}
\usepackage{pgfplotstable}
\usepackage{geometry}
\usepackage{color}
\usepackage{booktabs}
\usepackage{mathrsfs}
\usepackage{cleveref}
\usepackage[utf8]{inputenc}
\usepackage{enumitem}
\usepackage{algorithm}
\usepackage{algpseudocode}
\newcommand\TheTitle{Low-rank updates and a divide-and-conquer method for linear matrix equations}

\newcommand\TheAuthors{Daniel Kressner, 
	Stefano Massei, and Leonardo Robol}
\newcommand{\trid}{\hbox{trid}}

\headers{Low-rank updates and divide-and-conquer for linear matrix equations}{\TheAuthors}

\title{\TheTitle\thanks{The research of the third author was partially supported by GNCS projects ``Metodi numerici
		avanzati per equazioni e funzioni di matrici con struttura'' and ``Tecniche innovative per
		problemi di algebra lineare''.}}

\author{
	Daniel Kressner\thanks{EPF Lausanne, Switzerland,
		\email{daniel.kressner@epfl.ch}} \and
	Stefano Massei\thanks{EPF Lausanne, Switzerland,
		\email{stefano.massei@epfl.ch}} \and
              Leonardo Robol\thanks{Department of Mathematics, University of Pisa,
                and ISTI-CNR, Pisa, Italy, 
                \email{leonardo.robol@unipi.it}}
}

\pgfplotsset{compat=1.9}

\pgfplotstableset{
	every head row/.style={before row=\toprule,after row=\midrule},
	clear infinite
}

\usepackage{amsopn}
\DeclarePairedDelimiter{\norm}{\lVert}{\rVert}
\DeclarePairedDelimiter{\abs}{\lvert}{\rvert}

\DeclareMathOperator{\Span}{span}

\DeclareMathOperator{\capac}{Cap}

\numberwithin{theorem}{section}


\newsiamremark{remark}{Remark}
\newsiamremark{example}{Example}

\renewcommand{\leq}{\leqslant}

\ifpdf
\hypersetup{
	pdftitle={\TheTitle},
	pdfauthor={\TheAuthors}
}
\fi

\begin{document}
	\maketitle
	\begin{abstract}
Linear matrix equations, such as the Sylvester and Lyapunov equations, play an important role in various applications, including the stability analysis and dimensionality reduction of linear dynamical control systems and the solution of partial differential equations. In this work, we present and analyze a new algorithm, based on tensorized Krylov subspaces, for quickly updating the solution of such a matrix equation when its coefficients undergo low-rank changes. We demonstrate how our algorithm can be utilized to accelerate the Newton method for solving continuous-time algebraic Riccati equations. Our algorithm also forms the basis of a new divide-and-conquer approach for linear matrix equations with coefficients that feature hierarchical low-rank structure, such as HODLR, HSS, and banded matrices. Numerical experiments demonstrate the advantages of divide-and-conquer over existing approaches, in terms of computational time and memory consumption.
\bigskip

{\bf Keywords:} Sylvester equation, Lyapunov equation, low-rank update, divide-and-conquer, hierarchical matrices.

\bigskip 

{\bf AMS subject classifications:} 
15A06, 
93C20. 
\end{abstract}
		\section{Introduction}
	This work is concerned with linear matrix equations of the form
\begin{equation}\label{eq:sylv}
	AX+XB=C,
\end{equation}
for given matrices $A\in\mathbb C^{n\times n}$, $B\in\mathbb C^{m\times m}$, and $C\in\mathbb C^{n\times m}$. It is well known that this equation admits a unique solution $X$ if and only if the spectra of $A$ and $-B$ are disjoint. For general coefficient matrices,~\eqref{eq:sylv} is usually called Sylvester equation. In the special case $B = A^*$ and $C=C^*$,~\eqref{eq:sylv} is called Lyapunov equation and its solution can be chosen Hermitian. If, moreover, $C$ is negative semi-definite and $A$ is stable (i.e., its spectrum is contained in the open left half plane) then the solution is positive semi-definite.

We specifically target the setting where both $m,n$ are large and $A,B,C$ admit certain data-sparse representations, such as sparsity or (hierarchical) low-rank structures. The need for solving such large-scale linear matrix equations arises in various application fields. In dynamical systems and control, Lyapunov equations arise in model reduction~\cite{Antoulas2005}, linear-quadratic optimal control~\cite{Benner2013}, and stability analysis~\cite{MITbook,Elman2012}. In these applications, it is often but not always the case that  $C$ has low-rank.  Partial differential equations (PDEs) are a frequent source of  Sylvester equations, where they typically arise from highly structured discretizations of PDEs with separable coefficients; see  \cite{Grasedyck2007,Kressner2010,Ringh2017,Simoncini2016} for recent examples. Other applications arise from the linearization of nonlinear problems, such as stochastic dynamic general equilibrium models in macroeconmics~\cite{Kamenik2005}.

In this work, we study low-rank updates for Lyapunov and Sylvester equations. Given the solution $X_0$ of the reference equation
\begin{equation}\label{eq:reference-sylv}
A_0 X_0 + X_0 B_0 = C_0, 
\end{equation}
we aim at computing a correction $\delta X$ such that $X_0+\delta X$ solves the perturbed equation
\begin{equation}\label{eq:update-sylv}
	(A_0+\delta A)(X_0+\delta X)+(X_0+\delta X)(B_0+\delta B)=C_0+\delta C,
\end{equation}
where the perturbations $\delta A$, $\delta B$, $\delta C$ all have ranks much smaller than $\min\{m,n\}$. This is not only a natural problem to study but it also occurs in some applications. For example, it arises when optimizing dampers in mechanical models~\cite{Kuzmanovic2013} or, as we will see below, in the Newton method for solving Riccati equations. However, we expect, and it will be demonstrated in the second part of this paper, that the availability of a fast technique for computing $\delta X$ will open up a range of other applications.

The literature is scarce on updates of the form~\eqref{eq:update-sylv}. Kuzmanovi\'c and Truhar~\cite{Kuzmanovic2013} view the left-hand side~\eqref{eq:update-sylv} as a low-rank perturbation $\mathbb L + \triangle \mathbb L$ of the operator $\mathbb L: X_0 \to A_0 X_0 + X_0 B_0$. In turn, this allows to apply operator variants of the Sherman-Morrison-Woodbury formula discussed, e.g., in~\cite{Damm2008,Kuzmanovic2013,Richter1993}. This approach is mathematically equivalent to applying the standard Sherman-Morrison-Woodbury to the $n^2\times n^2$ linear system corresponding to~\eqref{eq:update-sylv} and it allows to deal with a much larger class of perturbations, leaving the realm of Sylvester equations. However, it also comes with the major disadvantage of increasing the ranks significantly. For example, if $\delta A$ has rank $r$ then the operator $X \to \delta A X$, with the matrix representation $I_n \otimes A$, has rank $rn$. This makes it impossible to address large values of $n$ with existing techniques for solving Sylvester equations.

The approach proposed in this work proceeds by subtracting equation~\eqref{eq:reference-sylv} from~\eqref{eq:update-sylv}, which gives the correction equation
\begin{equation}\label{eq:delta-sylv}
(A_0 + \delta A)\delta X+ \delta X (B_0+\delta B) = \delta C - \delta A\cdot X_0 -X_0\cdot \delta B.
\end{equation}
This is again a Sylvester equation, but -- in contrast to~\eqref{eq:update-sylv} --  the right-hand side always has low rank; it is bounded by $\text{rank}(\delta A) + \text{rank}(\delta B) + \text{rank}(\delta C)$. This allows for the use of large-scale Sylvester solvers tailored to low-rank right-hand sides, such as low-rank ADI and (rational) Krylov subspace methods;
 see~\cite{Benner2013,Simoncini2016} for overviews. These techniques return a low-rank approximation to $\delta X$ and can potentially address very large values of $m,n$, as long as the data-sparsity of $A_0 + \delta A$ and $B_0 + \delta B$ allows for fast matrix-vector multiplication and/or solution of (shifted) linear systems with these matrices. Let us emphasize that our approach is of little use when the rank of $C_0$ is at the same level as the ranks of the perturbations or even lower. In this case, it is more efficient to solve~\eqref{eq:update-sylv} directly. 

In the second part of this work, we devise fast methods for Sylvester equations with coefficients $A,B,C$ that feature hierarchical low-rank structures. In this work, we focus on HODLR matrices~\cite{Ambikasaran2013}, a special case of hierarchical matrices~\cite{Hackbusch2015}, and HSS matrices~\cite{Xia2010}, a special case of ${\mathcal H}^2$-matrices~\cite{Borm2010}. Both formats include banded matrices as an important special case. In fact, there has been recent work by Haber and Verhaegen~\cite{Haber2016} that aims at approximating the solution $X$ by a banded matrix for Lyapunov equations with banded coefficients. Palitta and Simoncini~\cite{Palitta2017} consider the approximation of $X$ by the sum of a banded matrix and a low-rank matrix. Both approaches work well for well-conditioned Lyapunov equations but their memory consumption grows considerably as the condition number increases. As we will see below, this difficulty is avoided when approximating $X$ with a hierarchical low-rank matrix instead, even when the coefficients are banded.

Most existing algorithms for solving Lyapunov or Sylvester equations with hierarchical low-rank structure are based on the matrix sign function iteration~\cite{Denman1976}, exploiting the fact that the iterates can be conveniently evaluated and approximated in these formats. The use of the matrix sign function requires the spectra of $A$ and $-B$ to be not only disjoint but 
separable by a (vertical) line. Sign-function based algorithms have been developed for hierarchical matrices~\cite{Grasedyck2003a,Baur2006}, sequentially semi-separable matrices~\cite{Rice2009}, HSS matrices~\cite{Pauli2010}, and HODLR matrices~\cite{Massei2017}.
Another, less explored direction is to apply numerical quadrature to an integral representation for $X$ and evaluate the resulting matrix inverses or exponentials in a hierarchical low-rank format; see~\cite{Gavrilyuk2005} for an example. All these methods exploit the structure indirectly by performing approximate arithmetic operations in the format. This incurs a repeated need for recompression, which often dominates the computational time.

In this work, we develop a new divide-and-conquer method that directly exploits hierarchical low-rank structure and does not require separability of the spectra of $A$ and $-B$. The main idea of the method is to split the coefficients $A,B,C$ into a block diagonal part and an off-diagonal part. The block diagonal part is processed recursively and the off-diagonal part, which is assumed to be of low rank, is incorporated by solving the correction equation~\eqref{eq:delta-sylv}.

The rest of this paper is organized as follows. In Section~\ref{sec:low-rank} we recall theoretical tools from the literature that ensure the low-rank approximability of the solution $\delta X$ of \eqref{eq:delta-sylv}. Section~\ref{sec:alg-and-ric} is devoted to describe in details the low-rank solver employed for approximating $\delta X$. We also discuss how to 
 rephrase the Newton's iteration, for solving  CAREs, as the updating of a  matrix equation. Numerical tests regarding this application are reported in Section~\ref{sec:ric-test}.
 In Section~\ref{sec:dac} we introduce a divide-and-conquer method for solving linear matrix equations whose coefficients can be hierarchically partitioned as block diagonal plus low-rank matrices. We provide an analysis in the case of coefficients represented in the HODLR and HSS formats. The algorithm is tested on examples coming from the discretization of PDEs and linear-quadratic control problems for second order models. The results are reported in Section~\ref{sec:num-res}. Finally, in Section~\ref{sec:conclusion} we draw the conclusions and we comment some open questions.

\section{Low-rank approximability} \label{sec:low-rank}

In this section, we recall existing results indicating when the correction $\delta X$ to the solution of the perturbed Sylvester equation~\eqref{eq:update-sylv} admits a good low-rank approximation. For this purpose, we write~\eqref{eq:delta-sylv} more compactly as
\begin{equation}  \label{eq:compactdelta}
  A\,\delta X + \delta X\, B = D, \qquad \text{rank}(D) \le
    s := \mathrm{rank}(A)+\mathrm{rank}(B) + \mathrm{rank}(C)
\end{equation}
In the following, we say that $\delta X$ admits an $\epsilon$-approximation of rank $k$ if there is a matrix $Y$ of rank at most $k$ such that $\|\delta X-Y\|_2\le \epsilon$, where $\|\cdot\|_2$ denotes the matrix
$2$-norm. Clearly, this is the case if and only if the $(k+1)$th largest
singular value $\sigma_{k+1}(\delta X)$ is bounded by $\epsilon$ (or the size of $\delta X$ is smaller than $k+1$).

There are numerous results in the literature on the singular value decay of solutions of equations of the form~\eqref{eq:compactdelta}; see~\cite{Antoulas2002,Baker2015,Grasedyck2003a,Grubisic2014,Penzl2000,Sabino2006} for examples. Recent work by Beckermann and Townsend~\cite{Beckermann2016} yields a general framework for obtaining such results. Let $\mathcal R_{h,h}$ denote the set of rational functions with numerator and denominator of degree at most $h$.  The proof of \cite[Thm 2.1]{Beckermann2016} shows that for every $r\in \mathcal R_{h,h}$ there is a matrix $Y_h$ of rank at most $kh$ such that $\delta X-Y_{h}= r(A)\, \delta X\, r(-B)^{-1}$, provided that the expression on the right-hand side is well defined. In turn,
\begin{equation} \label{eq:boundsigma}
 \sigma_{kh +1} (\delta X) \le \|r(A)\|_2 \|r(-B)^{-1}\|_2 \| X\|_2. 
\end{equation}
To proceed from here, we recall that the numerical range of a matrix $A$ is defined as
	\[
	\mathcal W(A) := \Big\{ \frac{x^* A x}{x^* x} \ \Big| \ x \in \mathbb {C}^n \backslash \{ 0 \} \Big\}. 
	\]
	
\begin{theorem} \label{thm:beckermann}
Consider the Sylvester equation~\eqref{eq:compactdelta} and let 
$E$ and $F$ be disjoint compact sets containing 
the numerical ranges of $A$ and $-B$, respectively. Then
\[
	\frac{\sigma_{1+kh}(\delta X)}{\norm{ X}_2}\leq  Z_h(E,F) := K_C\,\min_{r\in\mathcal R_{h,h}}\frac{\max_E \abs{r(z)}}{\min_F \abs{r(z)}},
\]
where $K_C = 1$ if $A,B$ are normal matrices and $1\le K_C \le (1+\sqrt{2})^2$ otherwise.
\end{theorem}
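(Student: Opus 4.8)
The plan is to start from the per-function estimate \eqref{eq:boundsigma} and to replace the two operator-norm factors $\norm{r(A)}_2$ and $\norm{r(-B)^{-1}}_2$ by scalar quantities that depend on $r$ only through its values on $E$ and $F$. Fix an $r\in\mathcal R_{h,h}$ whose poles avoid $\mathcal W(A)$ and whose zeros avoid $\mathcal W(-B)$; these are precisely the functions for which $r(A)$ and $r(-B)^{-1}$, and hence the identity $\delta X-Y_h=r(A)\,\delta X\,r(-B)^{-1}$, make sense. Any other $r$ forces $\max_E\abs{r}=\infty$ or $\min_F\abs{r}=0$ and so contributes nothing to the minimization, so it is harmless to restrict to admissible $r$ from the outset. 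For such $r$, \eqref{eq:boundsigma} reads $\sigma_{1+kh}(\delta X)\le\norm{r(A)}_2\,\norm{r(-B)^{-1}}_2\,\norm{\delta X}_2$, and the task reduces to bounding the product of the two norms.

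The essential tool is the Crouzeix--Palencia bound on the numerical range: for any matrix $M$ and any $f$ analytic on a neighbourhood of $\mathcal W(M)$, $\norm{f(M)}_2\le(1+\sqrt2)\max_{z\in\mathcal W(M)}\abs{f(z)}$. I would apply it twice. Taking $M=A$ and $f=r$ gives $\norm{r(A)}_2\le(1+\sqrt2)\max_{\mathcal W(A)}\abs{r}\le(1+\sqrt2)\max_E\abs{r}$, where the last step uses $\mathcal W(A)\subseteq E$. Taking $M=-B$ and $f=1/r$, and using $r(-B)^{-1}=(1/r)(-B)$, gives $\norm{r(-B)^{-1}}_2\le(1+\sqrt2)\max_{\mathcal W(-B)}\abs{1/r}\le(1+\sqrt2)/\min_F\abs{r}$, where the final step combines $\max\abs{1/r}=1/\min\abs{r}$ with $\mathcal W(-B)\subseteq F$. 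Multiplying the two estimates yields the constant $(1+\sqrt2)^2$ and the ratio $\max_E\abs{r}/\min_F\abs{r}$.

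Substituting back into \eqref{eq:boundsigma} gives $\sigma_{1+kh}(\delta X)/\norm{\delta X}_2\le(1+\sqrt2)^2\,\max_E\abs{r}/\min_F\abs{r}$ for every admissible $r$, and taking the infimum over $r\in\mathcal R_{h,h}$ produces the stated bound with $K_C=(1+\sqrt2)^2$. For the normal case I would replace the Crouzeix--Palencia step by the exact identity $\norm{f(M)}_2=\max_{\lambda\in\sigma(M)}\abs{f(\lambda)}$, valid for normal $M$; since $\sigma(A)\subseteq\mathcal W(A)\subseteq E$ and $\sigma(-B)\subseteq\mathcal W(-B)\subseteq F$, each factor is then controlled with constant $1$, giving $K_C=1$.

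The only nonroutine ingredient is the Crouzeix--Palencia inequality itself, a deep result that does all the analytic work; the remainder is bookkeeping, namely restricting to rational functions analytic on $E$ and nonvanishing on $F$ and enlarging the numerical ranges to the supersets $E$ and $F$. The one point to handle with care is the well-definedness of the functional calculus, but as noted this is automatic because any $r$ violating it makes the right-hand side of the bound infinite.
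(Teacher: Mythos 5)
Your proposal is correct and follows essentially the same route as the paper: the normal case via exact spectral functional calculus (equivalent to the paper's diagonalization argument), and the general case by applying the Crouzeix--Palencia bound $\norm{f(M)}_2\le(1+\sqrt2)\max_{\mathcal W(M)}\abs{f}$ to $r$ at $A$ and to $1/r$ at $-B$, then combining with~\eqref{eq:boundsigma}. The extra care you take with admissibility of $r$ (well-definedness of $r(A)$ and $r(-B)^{-1}$) is a detail the paper leaves implicit, but it does not change the argument.
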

\begin{proof}
The result for normal matrices, which is also covered in~\cite{Beckermann2016}, follows immediately from~\eqref{eq:boundsigma} by diagonalizing $A,B$. To address the general case, we use Crouzeix's theorem~\cite{Crouzeix2017}, which implies
\begin{eqnarray*}
  \|r(A)\|_2 &\le& (1+\sqrt{2})\, \max_{z\in E} |r(z)|, \\
  \|r(-B)^{-1}\|_2 &\le& (1+\sqrt{2})\, \max_{z\in E} |1/r(z)| = (1+\sqrt{2}) \big( \min_{z\in F} |r(z)| \big)^{-1}.
\end{eqnarray*}
Combined with~\eqref{eq:boundsigma}, this completes the proof.
\end{proof}

The result of Theorem~\ref{thm:beckermann} links the (relative) singular values of $\delta X$ with the quantity $Z_{h}(E,F)$, usually called the 
$h$th Zolotarev number \cite{Beckermann2016}. Intuitively, this number becomes small when $E$ and $F$ are well separated.  The case when $E$, $F$ are intervals is particularly well understood and the considerations from~\cite[Sec. 3]{Beckermann2016} lead to the following result.
\begin{corollary} \label{cor:decay}
Let $A,B$ be Hermitian positive definite matrices with spectra contained in an interval $[a,b]$, $0<a<b<\infty$.
Then the solution $\delta X$ of~\eqref{eq:compactdelta} satisfies 
	\begin{equation}\label{eq:zol-decay}
	\frac{\sigma_{1+kh}(\delta X)}{\norm{\delta X}_2}\leq 4\rho^{-h},\qquad \rho:=\operatorname{exp}\left(\frac{\pi^2}{\log(4b/a)}\right).
	\end{equation}
\end{corollary}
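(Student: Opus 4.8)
The plan is to obtain the estimate as a corollary of Theorem~\ref{thm:beckermann} by choosing the two compact sets optimally and then reducing the resulting Zolotarev number to an extremal problem on two symmetric real intervals. Since $A$ is Hermitian with spectrum in $[a,b]$, it is normal and its numerical range is the real segment $\mathcal W(A) = [\lambda_{\min}(A),\lambda_{\max}(A)] \subseteq [a,b] =: E$; likewise $\mathcal W(-B) \subseteq [-b,-a] =: F$. The sets $E$ and $F$ are compact and, because $0 < a$, disjoint, so Theorem~\ref{thm:beckermann} applies with $K_C = 1$ (both matrices being normal) and yields
\begin{equation*}
\frac{\sigma_{1+kh}(\delta X)}{\|\delta X\|_2} \le Z_h([a,b],[-b,-a]) = \min_{r\in\mathcal R_{h,h}} \frac{\max_{z\in[a,b]}|r(z)|}{\min_{z\in[-b,-a]}|r(z)|}.
\end{equation*}
Hence it remains only to bound this particular Zolotarev number by $4\rho^{-h}$.

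Next I would exploit the symmetry $F = -E$. The affine map $z\mapsto z/b$ leaves the Zolotarev number invariant and transports the pair $([a,b],[-b,-a])$ to the canonical symmetric intervals $([\alpha,1],[-1,-\alpha])$ with $\alpha = a/b$. For such a symmetric configuration one may restrict attention to rational functions satisfying $r(-z) = 1/r(z)$, for which $\min_{F}|r| = 1/\max_{E}|r|$ and the ratio collapses to $(\max_{[\alpha,1]}|r|)^2$; the resulting extremal problem is exactly Zolotarev's classical third problem, whose solution is known explicitly in terms of Jacobi elliptic functions. This is precisely the machinery developed in~\cite[Sec.~3]{Beckermann2016}, and I would simply invoke their explicit interval estimate, in which the per-degree decay base is expressed through a ratio of complete elliptic integrals associated with the modulus $\alpha$.

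Finally, to pass from the elliptic-integral expression to the clean closed form $\rho = \exp(\pi^2/\log(4b/a))$, I would replace the modulus by its elementary bound. The ratio of complete elliptic integrals governing the decay is controlled, for $\alpha = a/b \in (0,1)$, by the standard estimate on the conformal modulus of the Gr\"otzsch ring, which gives a per-degree decay rate of at least $\pi^2/\log(4/\alpha) = \pi^2/\log(4b/a)$; the leading factor $4$ is inherited unchanged from the theta-function representation of the extremal value. Combining this with the reduction above produces exactly~\eqref{eq:zol-decay}.

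I expect the genuine work to sit entirely in the middle step: converting the abstract Zolotarev number $Z_h([\alpha,1],[-1,-\alpha])$ into a quantity one can estimate by hand. Everything preceding it (applying Theorem~\ref{thm:beckermann}, checking normality and disjointness, and the affine normalization) is immediate, and everything following it is an elementary manipulation of elliptic integrals. The delicate points are getting the exponent right --- ensuring that the elliptic-integral bound produces $\pi^2/\log(4b/a)$ rather than a version off by a constant multiple --- and confirming that the constant $4$, rather than some larger value, suffices; both are settled by the detailed analysis in~\cite{Beckermann2016}.
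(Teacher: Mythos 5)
Your proposal is correct and follows essentially the same route as the paper: apply Theorem~\ref{thm:beckermann} with $E=[a,b]$, $F=[-b,-a]$ and $K_C=1$ (normality), then invoke the interval Zolotarev bound from \cite[Sec.~3]{Beckermann2016}, which is exactly how the paper obtains Corollary~\ref{cor:decay}. The extra detail you supply on the symmetrization $r(-z)=1/r(z)$, the elliptic-integral solution, and the Gr\"otzsch-type estimate merely unpacks the cited reference rather than constituting a different argument.
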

Similar results have been derived in~\cite{Braess2005,Sabino2006}. The inequality~\eqref{eq:zol-decay} implies that $\delta X$ admits an $\epsilon$-approximation of rank $kh$ with $\epsilon$ exponentially decaying to zero as $h$ increases. Moreover, the relative separation $b/a$ of the spectra has a very mild logarithmic influence on the exponential decay rate.


Corollary~\ref{cor:decay} easily extends to the case of
diagonalizable coefficients with real spectra
\cite[Corollary~4.3]{Bini2017}. Letting $\kappa_{\mathrm{eig}}(A)$ and $\kappa_{\mathrm{eig}}(B)$ denote the condition numbers of eigenvector matrices of $A$ and $B$, respectively, one has
\[
\frac{\sigma_{1+kh}(\delta X)}{\norm{\delta X}_2}\leq 4 \kappa_{\mathrm{eig}}(A) \kappa_{\mathrm{eig}}(B)\rho^{-h}.
\]

When $E,F$ are not intervals, it appears to be difficult to derive bounds for $Z_h(E,F)$ that match~\eqref{eq:zol-decay} in terms of strength and elegance; see~\cite{Beckermann2016,lebailly} and the references therein.
Under reasonable assumptions, $Z_h(E,F)$ can be bounded with a function that depends on the so called \emph{logarithmic capacity} of the condenser with plates $E$ and $F$ \cite{Gonchar69}. In particular, Ganelius in \cite{Ganelius80} showed the inequality
\begin{equation}\label{eq:ganelius}
Z_h(E,F)\leq \gamma \rho^{-h},\qquad  \rho:=\exp\left(\frac{1}{\capac(E,F)}\right),
\end{equation}
where the constant $\gamma$ depends only on the geometry of $E$ and $F$ and $\capac(E,F)$ denotes the logarithmic capacity of the condenser with plates $E$ and $F$.
The decay rate in \eqref{eq:ganelius} is tight, i.e., $\lim_{h\to\infty}	Z_h(E,F)^{\frac 1h}=\rho^{-1}$, see \cite{Gonchar69}. However, the estimation of $\capac(E,F)$ is strongly problem dependent and its asymptotic behavior, when the plates approach each other, has been the subject of quite recent investigations, see \cite{Bonnafe2016} and the references therein.

A rather different approach, for getting singular values inequalities, has been suggested by Grasedyck et al.\cite{Grasedyck2004a}. Letting $\Gamma_A$, $\Gamma_B$ denote disjoint contours encircling the spectra of $A$ and $-B$, respectively, the solution $\delta X$ of~\eqref{eq:compactdelta} admits the integral representation
\[
\delta X=\frac{1}{4\pi^2}\int_{\Gamma_A}\int_{\Gamma_B}\frac 1{\xi-\eta}(\xi I-A)^{-1} D (\eta I +B)^{-1}d\xi d\eta.
\]
Then $\Gamma_B$ is split up into $s$ parts such that $\frac 1{\xi-\eta}$ admits a good semi-separable (polynomial) approximation on each part. Each approximation corresponds to a low-rank matrix and by summing up all $s$ parts of $\Gamma_B$ one obtains a low-rank approximation of $\delta X$. Although this technique is shown to establish exponential decay for certain shapes of $\Gamma_A$ and $\Gamma_B$, a small relative separation may require to use a very large $s$, resulting in unfavorable decay rates.

	\section{Updating algorithm and application to algebraic Riccati equations}

\label{sec:alg-and-ric}

Algorithm~\ref{alg:split} summarizes the procedure outlined in the introduction for updating the solution $X_0$ of $A_0X_0 + X_0 B_0 = C_0$ such that $X_0 + \delta X$ approximates the solution of the perturbed Sylvester equation~\eqref{eq:update-sylv}. In the following, we discuss details of the implementation of Algorithm~\ref{alg:split} and then present a modification for Lyapunov equations as well as an application to Riccati equations.

\begin{algorithm}
	\caption{ Strategy for solving $(A_0+\delta A)(X_0+\delta X)+(X_0+\delta X)(B_0+\delta B)=C_0+\delta C$}\label{alg:split}
	\begin{algorithmic}[1]
		\Statex{{\bf procedure}} update\_Sylv($A_0,\delta A, B_0,\delta B,C_0,\delta C,X_0$)\Comment{$\delta A,\delta B$ and $\delta C$ have low rank}
		\State $\quad\!$ \label{step:rhs} Compute $U,V$ such that $\delta C-\delta A X_0-X_0\delta B = UV^*$
		\State $\quad\!$ \label{step:lrsolver} $\delta X\gets$ \Call{low\_rank\_Sylv}{$A_0+\delta A,B_0+\delta B,U,V$}
		\State $\quad$ \Return $X_0+\delta X$
		\Statex{{\bf end procedure}} 
	\end{algorithmic}
\end{algorithm}

\subsection{Step~\ref{step:rhs}: Construction of low-rank right-hand side}\label{sec:compress}

Given (low-rank) factorizations of the perturbations to the coefficients,
\[
\delta A=U_AV_A^*,\quad \delta B=U_BV_B^*,\quad \delta C=U_CV_C^*,
\]
a factorization $\delta C-\delta A X_0-X_0\delta B = UV^*$ can be cheaply obtained by simply setting
\begin{equation} \label{eq:uncompressedfactors}
U=[U_C, -U_A, -X_0U_B],\qquad V=[V_C, X_0^*V_A, V_B],
\end{equation}
where $U,V$ both have $s = \text{rank}(\delta A)+\text{rank}(\delta B)+\text{rank}(\delta C)$ columns.

The computational cost of low-rank solvers for Sylvester equations, such as the extended Krylov subspace method discussed in the next section, critically depends on the rank of the right-hand side. It is therefore advisable to perform a compression of the factors~\eqref{eq:uncompressedfactors} in order to decrease the rank. For this purpose, we compute 
reduced QR decompositions $U = Q_UR_U$ and $V = Q_VR_V$ such that $Q_U \in \mathbb R^{m\times s}$, $Q_V \in \mathbb R^{n\times s}$ have orthonormal columns and $R_U, R_V \in \mathbb R^{s\times s}$ are upper triangular. We then compute the singular values $\sigma_1,\ldots,\sigma_s$ of the $s\times s$ matrix
$R_UR_V^*$. We only retain the first $\tilde s \le s$ singular values, such that $\sigma_{\tilde s+1} / \sigma_1 \le \tau_{\sigma}$ for a user-specified tolerance $\tau_{\sigma} > 0$.
Letting $U_R$ and $V_R$ contain the first $\tilde s$ left and right singular vectors, respectively, and $\Sigma := \text{diag}(\sigma_1,\ldots,\sigma_{\tilde s})$, we set
\[
\widetilde U:=Q_UU_R\sqrt{\Sigma},\qquad \widetilde
V:=Q_VV_R\sqrt{\Sigma}.
\]
By construction, $\frac{\norm{\widetilde U\widetilde V^*-UV^*}_2}{\norm{UV^*}_2}\leq
\tau_{\sigma}$ and we can therefore safely replace the factorization $UV^*$ by $\widetilde U\widetilde V^*$.
Dominated by the computation of the two QR decompositions and the SVD, the described compression procedure requires $\mathcal O((m+n)s^2+s^3)$. In our setting, this method is attractive because $s\ll \min\{n,m\}$.


\subsection{Step~\ref{step:lrsolver}: Solution of correction equation}\label{sec:lrsolver}
Step~\ref{step:lrsolver} of Algorithm~\ref{alg:split} requires to solve a Sylvester equation of the form
\begin{equation}\label{eq:lr-sylv}
AX+XB=UV^*,
\end{equation} 
where $A = A_0+\delta A$, $B = B_0+\delta B$, and  $U,V$ have $s\ll \min\{n,m\}$ columns. We assume that $X$ can be well approximated by a low-rank matrix; which is the case -- for example -- when the 
hypotheses of
Theorem~\ref{thm:beckermann} are satisfied with two sets $E$ and $F$ 
ensuring a fast decay of $Z_\ell(E,F)$. The most common solvers for~\eqref{eq:lr-sylv} are ADI-type methods and
Krylov subspace projection algorithms~\cite{Simoncini2016}. One particularly effective approach to obtain a low-rank approximation of $X$ is the rational Krylov subspace method~\cite{Benner2013b} with the poles determined, e.g., by the rational approximation from Theorem~\ref{thm:beckermann}. On the other hand, the \emph{extended
	Krylov subspace method} introduced in \cite{Simoncini2007} does not require the estimation of such parameters and has been observed to be quite effective for many situations of interest. In the following, we therefore use this method for its simplicity but stress that any of the low-rank solvers mentioned above can be used instead.

The extended Krylov subspace method constructs orthonormal bases $U_t$ and $V_t$ of the subspaces
\begin{align*}
\mathcal U_t:&=\mathcal {EK}_t(A,U)= \Span\{U,A^{-1}U,AU,A^{-2}U,\dots,A^{t-1}U,A^{-t}U\},\\
\mathcal V_t:&=\mathcal {EK}_t(B^*,V)=\Span\{V,(B^*)^{-1}V,B^*V,(B^*)^{-2}V,\dots,(B^*)^{t-1}V,(B^*)^{-t}V\},
\end{align*}
for some $t$ satisfying $2ts<\min\{m,n\}$. This is done by means of
two extended block Arnoldi processes. Then,
the compressed equation
\[
\widetilde A X_t+X_t\widetilde B= \tilde C,\quad \widetilde A=
U_t^*AU_t,\quad \widetilde B= V_t^*BV_t,\quad \widetilde C=
U_t^*UV^*V_t,
\]
is solved by the Bartels-Stewart method~\cite{Bartels1972}. Note that the matrices $\widetilde A,\widetilde B$ and $\widetilde C$ do not
need to be computed explicitly, but can be obtained from the coefficients generated during the extended block Arnoldi processes; see~\cite[Proposition 3.2]{Simoncini2007} for more details. 
The matrix $\widetilde X = U_tX_tV_t^*$ is returned as approximation to the
solution of~\eqref{eq:lr-sylv}. The resulting method is summarized in Algorithm~\ref{alg:kryl}.  

\begin{algorithm}
	\caption{Extended Krylov subspace method for solving $AX+XB=UV^*$}\label{alg:kryl}
	\begin{algorithmic}[1]
		\Procedure{low\_rank\_Sylv}{$A,B,U,V$}\Comment{$U,V$ have both $s$ columns}
		\State $[U_1,-]=\texttt{qr}([U,A^{-1}U])$,\quad $[V_1,-]=\texttt{qr}([V,A^{-1}V])$
		\For{$t=1,2,\dots$}
		
		\State $\widetilde A\gets U_t^*A U_t$,\quad $\widetilde B\gets V_t^*BV_t$,\quad $\widetilde C\gets U_t^*UV^*V_t$
		\State $X_t\gets$  \Call{dense\_Sylv}{$\widetilde A,\widetilde B,\widetilde C$}
		\If{Converged}
		\State \Return $\widetilde X =U_tX_tV_t^*$
		\EndIf
		\State Select the last $2s$ columns: $U_t=[U^{(0)},U^{(+)},U^{(-)}]$,\quad $V_t=[V^{(0)},V^{(+)},V^{(-)}]$
		\State $\widetilde U=[AU^{(+)},A^{-1}U^{(-)}]$,\quad  $\widetilde V=[B^*V^{(+)},(B^*)^{-1}V^{(-)}]$
		\State $\widetilde U\gets \widetilde U-U_tU_t^*\widetilde U,$ \quad $\widetilde V\gets\widetilde V-V_tV_t^*\widetilde V$
		\Comment{Orthogonalize w.r.t $U_t$ and $V_t$}
		\State $[\tilde U,-]=\texttt{qr}(\tilde U),$\quad $[\tilde V,-]=\texttt{qr}(\tilde V)$
		\State $U_{t+1}=[U_t,\tilde U]$,\quad $V_{t+1}=[V_t,\tilde V]$
		\EndFor 
		\EndProcedure
	\end{algorithmic}
\end{algorithm}

A few remarks concerning the implementation of Algorithm~\ref{alg:kryl}:
\begin{itemize}
	\item 
	The number of iterations $t$ is chosen
	adaptively 
    to ensure that the relation
	\begin{equation}\label{eq:stop}
	\norm{A\widetilde X +\widetilde X B-UV^*}_2 \leq \tau
	\end{equation} is satisfied for some tolerance $\tau$, 
	which is chosen to be small 
	relative to the norm of $\widetilde X$, 
	i.e., $\tau = \tau_{\mathsf{EK}}
	\cdot \norm{\widetilde X}_2$ for a small $\tau_{\mathsf{EK}}$. This relation can be efficiently verified as described in \cite{Simoncini2007,Heyouni2008}. 
	
	\item The matrices generated in line $11$ of
	Algorithm~\ref{alg:kryl} might become numerically rank deficient. Several techniques have been proposed
	to deal with this phenomenon, see \cite{Birk,Gutknecht2006} and the references therein. We use the strategy described in \cite[Algorithm 7.3]{Birk},
	which performs pivoted QR decompositions in
	line $13$ and only retains columns corresponding to nonnegligible diagonal
	entries in the triangular factors.
	This reduces the size of the block vectors in all subsequent steps.
	
	\item For applying $A^{-1}$, $B^{-1}$ in Algorithm~\ref{alg:kryl}, (sparse) 
	LU factorizations of $A$ and $B$ are computed once before starting the extended block Arnoldi process. 
	
	\item When the algorithm is completed, we perform another compression of the returned solution by computing the truncated SVD of $X_t$ and using the same threshold $\tau_\sigma$ employed for compressing the right hand side. 
\end{itemize}

\subsection{Residual}\label{sec:residual}



As the correction equation is solved iteratively and inexactly, until the stopping criterion~\eqref{eq:stop} is satisfied, it is important to relate the accuracy of the solution 
$X$ to the accuracy of $X_0$ and $\delta X$. Suppose that the computed approximations $\hat X_0$ and 
$\delta \hat X$ satisfy
\[
  \norm{A_0 \hat X_0 + \hat X_0 B_0 - C_0} \leq \tau_0, \qquad  \norm{(A_0 + \delta A) \delta \hat X_0 + \delta \hat X_0 (B_0 + \delta B) - (\delta C - \delta A \hat X_0 - \hat X_0\delta B)} \leq \tau_\delta.
\] 
By the triangular
inequality, we can then conclude that $\hat X = \hat X_0 + \delta \hat X$ satisfies
\[
  \norm{(A_0 + \delta A) \hat X + \hat X (B_0 + \delta B) - (C_0 + \delta C)} \leq \tau_0 + \tau_\delta. 
\]
Hence, in order to avoid unnecessary work, it is advisable to choose $\tau_\delta$ not smaller than $\tau_0$.

\subsection{Stable Lyapunov equations}\label{sec:stable}

We now consider the special case of a Lyapunov equation $A_0 X_0 + X_0 A_0^* = C_0$, where $A_0 \in \mathbb C^{n\times n}$ is stable and $C_0 \in \mathbb C^{n\times n}$ is Hermitian negative semidefinite. We assume that the perturbed equation $A (X_0+\delta X) + (X_0+\delta X) A^* = C_0+\delta C$, with $A = A_0+\delta A$, has the same properties, implying that both $X_0$ and $X_0+\delta X$ are Hermitian positive semidefinite. In general, this does \emph{not} ensure that the corresponding correction equation 
\begin{equation}\label{eq:stable-lyap}
A\, \delta X+ \delta X\, A^* = \delta C - \delta A\, X_0 -X_0\, \delta A^*
\end{equation}
inherits these properties. In particular, the right-hand side may be indefinite. In turn, large-scale solvers tailored to stable Lyapunov equations with low-rank right-hand side~\cite{Li2004} cannot be directly applied to~\eqref{eq:stable-lyap}. Following~\cite[Sec. 2.3.1]{Benner2011a}, this issue can be addressed by splitting the right hand side. 
 
To explain the idea of splitting, suppose we have low-rank factorizations
$\delta A=U_AV_A^*$, $\delta C=U_C\Sigma_CU_C^*$, with $\Sigma_C$ Hermitian. In turn, the right-hand side of~\eqref{eq:stable-lyap} can be written as
\[
 \delta C - \delta A X_0 -X_0 \delta A^*=\widetilde U\Sigma\widetilde U^* \quad \text{with} \quad  \widetilde U = [U_C,U_A,X_0V_A],\quad \Sigma=\begin{bmatrix}
 \Sigma_C\\
 &&-I\\
 &-I
 \end{bmatrix}.
\]
After computing a reduced QR factorization $\widetilde U = Q_{\widetilde U}R_{\widetilde U}$, we compute a (reduced) spectral decomposition
of $R_{\widetilde U}\Sigma R_{\widetilde U}^*$ such that 
\[
R_{\widetilde U}\Sigma R_{\widetilde U}^* = \begin{bmatrix}
                                 Q_1 & Q_2
                                \end{bmatrix}
\begin{bmatrix}
                                 D_1 & 0 \\ 0 & -D_2
                                \end{bmatrix}
\begin{bmatrix}
                                 Q_1 & Q_2
                                \end{bmatrix}^*,
\]
where $D_1,D_2$ are diagonal matrices with positive diagonal elements. After setting $U_1=Q_{\widetilde U}Q_1\sqrt{D_1}$, $U_2=Q_{\widetilde U}Q_2\sqrt{D_2}$, this allows us to write 
$R_{\widetilde U}\Sigma R_U^* = U_1U_1^*-U_2U_2^*$. Hence, after solving the two stable Lyapunov equations
 \begin{equation} \label{eq:twostable}
A\,\delta X_1 + \delta X_1\, A^* = -U_1U_1^*,\qquad A\,\delta X_2 + \delta X_2\, A^* = -U_2U_2^*,
\end{equation}
the solution of \eqref{eq:stable-lyap} is obtained as $\delta X=\delta X_2-\delta X_1$.

The extended Krylov subspace method applied to~\eqref{eq:twostable} operates with the subspaces $\mathcal {EK}_t(A,U_1)$ and $\mathcal {EK}_t(A,U_2)$. This is more favorable than a naive application of the method to the original non-symmetric factorization $[U_C,-U_A,-X_0V_A][U_C,X_0V_A,U_A]^*$, which would operate with two subspaces of double dimension.

Another approach, which does not require the splitting of the
right hand side, relies on projecting the Lyapunov equation
onto the extended Krylov subspace $\mathcal {EK}(A, \widetilde U)$. In this way, we only need to generate a single Krylov subspace, even though with a larger block vector. In our
experience, the performances of the two approaches
are analogous. 
 
\subsection{Solving algebraic Riccati equation by the Newton method}\label{sec:riccati}
We now present a practical situation that requires to solve several perturbed Lyapunov equations sequentially. 

 Consider the \emph{continuous-time algebraic Riccati equation} (CARE)
\begin{equation}\label{eq:riccati}
AX+XA^*-XBX=C
\end{equation}
where $A\in\mathbb C^{n\times n}$ is a general matrix, while $B\in\mathbb C^{n\times n}$ is Hermitian positive semidefinite and $C\in\mathbb C^{n\times n}$ is Hermitian negative semidefinite. We also assume that the pair $(A^*,B)$ is \emph{stabilizable}, i.e., there exists $X_0\in\mathbb C^{n\times n}$ such that $A-X_0B$ is stable. Moreover, we suppose that $(C, A^*)$ is detectable, that is equivalent to the stabilizability
of $(A, C^*)$. 
Under these assumptions, there exists a unique Hermitian positive semi-definite solution $X\in\mathbb C^{n\times n}$ of \eqref{eq:riccati} such that $A-XB$ is stable~\cite{Kuvcera1992}. This is called the \emph{stabilizing} solution. 

Kleinman's formulation~\cite{Kleinman1968} of the Newton method~\eqref{eq:riccati} requires to solve the matrix equation
\begin{equation}\label{eq:newt-it}
 (A-X_kB)X_{k+1}+X_{k+1}(A^*-BX_k)=C-X_kBX_k,
 \end{equation}
 for determining the next iterate $X_{k+1}$. Assuming that the starting matrix $X_0$ is Hermitian, the matrices $X_k$ are Hermitian too and \eqref{eq:newt-it} is a Lyapunov equation with Hermitian right-hand side.
 
Under mild hypotheses, any Hermitian starting matrix $X_0$ such that $A-X_0B$ is stable, yields a quadratically convergent Hermitian sequence whose limit is the stabilizing solution \cite{Lancaster1995}. Moreover, the sequence is non-increasing in terms of the Loewner ordering. If $A$ is already stable, a natural choice is $X_0=0$.

In many examples of linear-quadratic control problems~\cite{Abels1999}, the coefficient $B$ has low-rank, i.e., it takes the form $B=B_UB_U^*$ where $B_U$ only has a few columns. In turn, two consecutive equations~\eqref{eq:newt-it} can be linked via low-rank updates. More explicitly,~\eqref{eq:newt-it} can be rewritten as
\[
(A_{k-1} + (X_{k-1}-X_k)B)X_{k+1} +X_{k+1}(A_{k-1}^*+B(X_{k-1}-X_k))= C_{k-1}+X_{k-1}BX_{k-1}-X_kBX_k, 
\]
where $A_{k-1}:= A-X_{k-1}B$ and $C_{k-1}:=C-X_{k-1}BX_{k-1}$.

Thus, after the --- possibly expensive --- computation of $X_1$ one can compute the updates $\delta X_k:=X_{k+1}-X_k$ by solving
\begin{equation}\label{eq:corr-lyap}
 (A-X_kB)\delta X_k+\delta X_k(A^*-BX_k)=\delta X_{k-1}B\delta X_{k-1},\quad k = 1,2,\dots.
\end{equation}
For this purpose, we use a variant of Algorithm~\ref{alg:kryl} tailored to Lyapunov equations, denoted by \textsc{low\_rank\_Lyap}. In contrast to the more general situation discussed in Section~\ref{sec:stable}, the right-hand side is always positive semi-definite and therefore no splitting is required.
The resulting method is described in Algorithm~\ref{alg:riccati}. For large scale problems, matrix $A_k$ at line $7$ is not formed explicitly and we rely on the Sherman-Morrison-Woodbury formula to compute the action of $A_k^{-1}$. If the final solution is rank structured then re-compression is advised after the sum at line $10$.
  \begin{algorithm}
  	\caption{Low-rank update Newton method for solving $AX+XA^*-XB_UB_U^*X=C$}\label{alg:riccati}
  	\begin{algorithmic}[1]
  		\Procedure{newt\_Riccati}{$A,B_U,C,X_0$}
  		\State $A_0\gets A-X_0B_UB_U^*$
  		\State $C_0\gets C- X_0B_UB_U^*X_0$	  	
  		\State \label{line:firstlyapnewton} $X_1\gets $\Call{solve\_Lyap}{$A_0,C_0$}\Comment{Any Lyapunov solver}
  		\State $\delta X_0\gets X_1-X_0$
  		\For{$k=1,2,\dots$}
  		
  		\State $A_k\gets A-X_{k}B_UB_U^*$
  		\State $C_U\gets \delta X_{k-1}B_U$

  		\State $\delta X_k\gets$  \Call{low\_rank\_Lyap}{$ A_k,C_U$}\label{line:corr-lyap}
  		\State $X_{k+1} = X_k+\delta X_k$
  		\If{$\norm{\delta X_k}_2<\tau_{\mathsf{NW}}\cdot \norm{X_1}_2$} \label{line:stopnewton}
  		\State \Return $ X_{k+1}$
  		\EndIf
  		\EndFor 
  		\EndProcedure
  	\end{algorithmic}
  \end{algorithm}
  
\subsubsection{Numerical experiments}\label{sec:ric-test}

We demonstrate the performance of Algorithm~\ref{alg:riccati} with two examples. Details of the implementation, the choice of parameters, and the computational environment are discussed in Section~\ref{sec:details} below.

%

  \begin{example}\label{exa:1}
 We first consider the convective thermal flow problem from the benchmark collection~\cite{Korvink2005}, leading to a CARE with coefficients
 $A\in\mathbb{R}^{9669\times 9669}$, $B_U\in\mathbb R^{9669\times 1}$, $C=-C_UC_U^*$ for $C_U\in\mathbb R^{9669\times 5}$.  
 The matrix $A$ is symmetric negative definite and sparse, only $67391$ entries are nonzero.
When applying the standard Newton method to this CARE, the right-hand side of the Lyapunov equation~\eqref{eq:newt-it}, that needs to be solved in every step, has rank at most $6$. On the other hand, the right-hand side of equation~\eqref{eq:corr-lyap} has rank $1$. As the computational effort of low-rank solvers for Lyapunov equations typically grows linearly with the rank, this makes Algorithm~\ref{alg:riccati} more attractive. In particular, this is the case for the extended Krylov subspace method, Algorithm~\ref{alg:kryl}, used in this work.

The performance of both variants of the Newton method is reported in Table~\ref{tab:exp1}. While $T_{\text{tot}}$ denotes the total execution time (in seconds), $T_{\text{avg}}$ denotes the average time needed for solving the Lyapunov equation in every step of the standard Newton method or Algorithm~\ref{alg:riccati} (after the first step). The quantity $\frac{T_{\texttt{step 1}}}{T_{\text{tot}}}$ shows the fraction of time spent by Algorithm~\ref{alg:riccati} on the (more expensive) first step. 
\emph{it} refers to the number of iterations and \emph{Res} refers to the relative residual $\norm{A\hat X+\hat XA^*-\hat XB\hat X-C}_2 / {\norm{AX_0+X_0A^*-X_0BX_0-C}_2}$ of the approximate solution $\hat X$ returned by each of the two variants.
The results reveal that Algorithm~\ref{alg:riccati} is faster while delivering the same level of accuracy.  	
  		\end{example}
  		
  		\begin{table}[t]
  			\centering 		
  			\small
  			\pgfplotstabletypeset[%
  			column type=l,
  			every head row/.style={
  				before row={
  					\toprule
  					& \multicolumn{7}{c|}{ ~Algorithm~\ref{alg:riccati}~~~~~~~~} & \multicolumn{4}{ c}{ Standard Newton method}\\
  				},
  				after row = \midrule ,
  			}, 	
  			every last row/.style={after row=\bottomrule},		
  			sci zerofill,
  			columns={0,11,1,3,4,5,6,7,8,9,10},
  			columns/0/.style={column name=$n$},
  			columns/11/.style={column name=$\norm{\hat X}_2$,sci},
  			columns/1/.style={column name=$T_{\text{tot}}$},
  			columns/3/.style={column name=$\frac{T_{\texttt{step 1}}}{T_{\text{tot}}}$},
  			columns/4/.style={column name=$T_{\text{avg}}$},    
  			columns/5/.style={column name=Res},
  			columns/6/.style={dec sep align={c|},column type/.add={}{|},column name=it},
  			columns/7/.style={column name=$T_{\text{tot}}$},
  			columns/8/.style={column name=$T_{\text{avg}}$},
  			columns/9/.style={column name=Res},
  			columns/10/.style={column name=it}
  			]{e3.dat} 
  			\caption{Performance of Algorithm~\ref{alg:riccati} and the standard Newton method for the CARE from Example~\ref{exa:1}.}
  			\label{tab:exp1}
  		\end{table}
  	
  	  	\begin{table}[h]
  	  		\centering 		
  	  		\small
  	  		\pgfplotstabletypeset[%
  	  		column type=l,
  	  		every head row/.style={
  	  			before row={
  	  				\toprule
  	  				& \multicolumn{7}{c|}{Algorithm~\ref{alg:riccati}~~~~~~~~} & \multicolumn{4}{c}{\quad Standard Newton method}\\
  	  			},
  	  			after row = \midrule,
  	  		}, 	
  	  		every last row/.style={after row=\bottomrule},		
  	  		sci zerofill,
  	  		columns={0,11,1,3,4,5,6,7,8,9,10},
  	  		columns/0/.style={column name=$n$},
  	  		columns/11/.style={column name=$\norm{\hat X}_2$,sci},
  	  		columns/1/.style={column name=$T_{\text{tot}}$},
  	  		columns/3/.style={column name=$\frac{T_{\texttt{step 1}}}{T_{\text{tot}}}$}, 
  	  		columns/4/.style={column name=$T_{\text{avg}}$, fixed},   
  	  		columns/5/.style={column name=Res},
  	  		columns/6/.style={dec sep align={c|},column type/.add={}{|},column name=it},
  	  		columns/7/.style={column name=$T_{\text{tot}}$},
  	  		columns/8/.style={column name=$T_{\text{avg}}$},
  	  		columns/9/.style={column name=Res},
  	  		columns/10/.style={column name=it}
  	  		]{e2.dat}
  	  		\caption{Performance of Algorithm~\ref{alg:riccati} and the standard Newton method for the CARE from Example~\ref{exa:2}.}
  	  		\label{tab:exp2}
  	  	\end{table}
  	  		
  	\begin{example}\label{exa:2}
  		We now consider Example 4.3 from~\cite{Abels1999}, a CARE with coefficients
  		\begin{align*}
  		A&=\begin{bmatrix}
  		0&-\frac{1}{4}K\\
  		I_q&-I_q
  		\end{bmatrix},\quad K={\small \begin{bmatrix}
  		1&-1\\
  		-1&2&-1\\
  		&\ddots&\ddots&\ddots\\
  		&&-1&2&-1\\
  		&&&-1&1
  		\end{bmatrix}} \in\mathbb R^{q \times q},\\	
  		B&=B_UB_U^*,\qquad B_U=\begin{bmatrix}
  		0\\ \frac 14 D
  		\end{bmatrix}\in\mathbb R^{2q\times 2},\qquad  D=\begin{bmatrix} e_1 & e_q 
  		\end{bmatrix}\in\mathbb R^{q\times 2},\quad C = I_{2q},
  		\end{align*}
  		where $e_j$ denotes the $j$th unit vector of length $q$.
  		Except for one zero eigenvalue, the spectrum of $A$ is contained in the open left half
                plane. A stabilizing initial guess is given by 
  		\[
  		X_0 =  
  		EE^*, \quad E=2\begin{bmatrix}
  		-e_q & e_1\\
  		-e_q&e_1
  		\end{bmatrix}\in\mathbb R^{2q\times 2}.
  		\] 
  		Note that $C$ has full rank, which prevents us from the use of low-rank solvers for addressing the Lyapunov equation~\eqref{eq:newt-it} in the standard Newton iteration and we need to resort to the (dense) Bartel-Stewart method implemented in the MATLAB function \texttt{lyap}. In contrast, the right-hand side of~\eqref{eq:corr-lyap} has rank $2$, which allows us to use such low-rank solvers in every but the first step of Algorithm~\ref{alg:riccati}.  
  		
  		The obtained results, for varying $n:=2q$, are shown in Table~\ref{tab:exp2}. Not surprisingly,
                Algorithm~\ref{alg:riccati} is much faster in this example because it
                only relies on the dense solver in the first
                step. 
  	\end{example}


	\section{A divide-and-conquer approach} \label{sec:dac}

In this section we use low-rank updates to devise a new divide-and-conquer method for the Sylvester equation~\eqref{eq:sylv}. For simplicity, we consider the case $m=n$ and hence the solution $X$ is a square $n\times n$ matrix. In principle, our developments extend to the case $m\neq n$ but additional technicalities come into play, e.g., the hierarchical low-rank formats need to be adjusted.

 Suppose that the coefficients of~\eqref{eq:sylv} can be decomposed as  
\begin{equation} \label{eq:decompcoeff}
A=A_0+\delta A,\qquad
B=B_0+\delta B,\qquad
C=C_0+\delta C,
\end{equation}
where $A_0,B_0,C_0$ are block diagonal matrices of the same shape and the corrections $\delta A, \delta B, \delta C$ all have low rank. This is, in particular, the case when all coefficients are banded but~\eqref{eq:decompcoeff} allows to handle more general situations. We apply Algorithm~\ref{alg:split} to deal with the low-rank corrections. It thus remains to solve the smaller Sylvester equations associated with the diagonal blocks of $A_0X_0+X_0B_0 = C_0$. If the diagonal blocks of $A_0,B_0,C_0$ again admit a decomposition of the form~\eqref{eq:decompcoeff}, we can recursively repeat the procedure. Keeping track of the low-rank corrections at the different levels of the recursions requires us to work with hierarchical low-rank formats, such as the hierachically off-diagonal low-rank (HODLR) and the hierarchically semi-separable (HSS) formats.

\subsection{HODLR matrices}
A HODLR matrix $A\in \mathbb C^{n\times n}$, as defined in \cite{Ballani2016,Ambikasaran2013}, admits a block partition of the form
\begin{equation} \label{eq:hodler1level}
A = \begin{bmatrix}
	A_{11}&A_{12}\\
	A_{21}&A_{22}
	\end{bmatrix},
\end{equation}
where $A_{12}$, $A_{21}$ have low rank and $A_{11}$, $A_{22}$ are again of the form~\eqref{eq:hodler1level}. This recursive partition is continued until the diagonal blocks have reached a certain minimal block size. For later purposes, it is helpful to give a formal definition of the HODLR format that proceeds in the opposite direction, from the finest block level to the full matrix. Given an integer $p$, let us consider an integer partition
\begin{equation} \label{eq:partn}
 n = n_1 + n_2 + \cdots + n_{2^p},
\end{equation}
where $p$ and $n_i$ are usually chosen such that all $n_i$ are nearly equal to the minimal block size. We build a perfectly balanced binary tree, the so called cluster tree, from this partition by setting $n_i^{(p)} := \sum_{j=1}^i n_j$ and defining the leaf nodes to be
\[
 I^p_1 = \{1,\ldots,n^{(p)}_1\}, \quad I^p_2 = \{n^{(p)}_1+1,\ldots,n^{(p)}_2\}, \quad \ldots \quad I^p_{2^p} = \{n^{(p)}_{2^p-1}+1,\ldots,n\} .
\]
The nodes of depth $\ell<p$ are defined recursively by setting $I^\ell_i = I^{\ell+1}_{2i-1} \cup I^{\ell+1}_{2i}$ for $i = 1,\ldots,2^\ell$. At the root, $I^0_1 = I:=\{1,\ldots,n\}$.
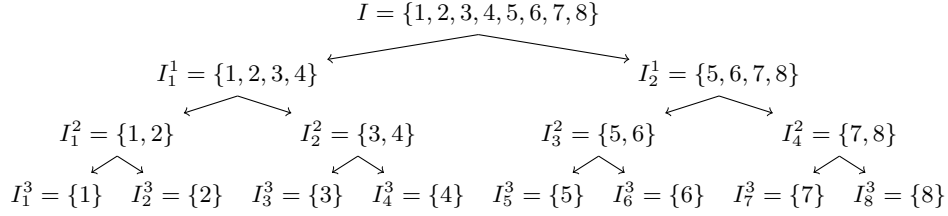
\begin{figure}
\centering 
 \begin{tikzpicture}[scale=0.8] \small
    \coordinate (T18) at (0,0);  
    \coordinate (T14) at (-4,-1);
    \coordinate (T58) at (4,-1);
    \coordinate (T12) at (-6,-2);
    \coordinate (T34) at (-2,-2);
    \coordinate (T56) at (2,-2);
    \coordinate (T78) at (6,-2);
    \coordinate (T1) at (-7,-3);
    \coordinate (T2) at (-5,-3);
    \coordinate (T3) at (-3,-3);
    \coordinate (T4) at (-1,-3);
    \coordinate (T5) at (1,-3);
    \coordinate (T6) at (3,-3);
    \coordinate (T7) at (5,-3);
    \coordinate (T8) at (7,-3);
    \node (N18) at (T18) {$I = \{1,2,3,4,5,6,7,8\}$};
    \node (N14) at (T14) {$I_1^1 = \{1,2,3,4\}$};
    \node (N58) at (T58) {$I_2^1 = \{5,6,7,8\}$};
    \node (N12) at (T12) {$I_1^2 = \{1,2\}$};
    \node (N34) at (T34) {$I_2^2 = \{3,4\}$};
    \node (N56) at (T56) {$I_3^2 = \{5,6\}$};
    \node (N78) at (T78) {$I_4^2 = \{7,8\}$};
    \node (N1) at (T1) {$I_1^3 = \{1\}$};
    \node (N2) at (T2) {$I_2^3 = \{2\}$};
    \node (N3) at (T3) {$I_3^3 = \{3\}$};
    \node (N4) at (T4) {$I_4^3 = \{4\}$};
    \node (N5) at (T5) {$I_5^3 = \{5\}$};
    \node (N6) at (T6) {$I_6^3 = \{6\}$};   
    \node (N7) at (T7) {$I_7^3 = \{7\}$};
    \node (N8) at (T8) {$I_8^3 = \{8\}$};
    \draw[->] (N18.south) -- (N14);
    \draw[->] (N18.south) -- (N58);  
    \draw[->] (N14.south) -- (N12);
    \draw[->] (N14.south) -- (N34);    
    \draw[->] (N58.south) -- (N56);  
    \draw[->] (N58.south) -- (N78);
    \draw[->] (N12.south) -- (N1);
    \draw[->] (N12.south) -- (N2);
    \draw[->] (N34.south) -- (N3);  
    \draw[->] (N34.south) -- (N4);  
    \draw[->] (N56.south) -- (N5);  
    \draw[->] (N56.south) -- (N6);          
    \draw[->] (N78.south) -- (N7);  
    \draw[->] (N78.south) -- (N8);          
\end{tikzpicture}
\caption{ Example of a cluster tree of depth $3$.}\label{fig:clustertree}
\end{figure}
Figure~\ref{fig:clustertree} provides an illustration of the cluster tree obtained for $n=8$, with the (impractical) minimal block size $1$. We use $\mathcal T_p$ to denote the cluster tree associated with~\eqref{eq:partn}.

\begin{figure}[!ht]
\centering
\begin{tikzpicture}[scale=0.3]    
      \begin{scope}
        [xshift=0cm]
        \node [above] at (4,8) {$\ell=0$};
        \draw (0,0) -- (0,8) -- (8,8) -- (8,0) -- cycle;
      \end{scope}
      \begin{scope}
        [xshift=10cm]
        \node [above] at (4,8) {$\ell=1$};
        \draw (0,0) -- (0,8) -- (8,8) -- (8,0) -- cycle;
        \draw[pattern=north west lines, pattern color=blue] (0,0) rectangle (4,4);
        \draw[pattern=north west lines, pattern color=blue] (4,4) rectangle (8,8);
        \draw (0,4) -- (8,4);
        \draw (4,0) -- (4,8);
      \end{scope}
      \begin{scope}
        [xshift=20cm]
        \node [above] at (4,8) {$\ell=2$};
        \draw (0,0) -- (0,8) -- (8,8) -- (8,0) -- cycle;
        \draw[pattern=north west lines, pattern color=blue] (4,0) rectangle (6,2);
        \draw[pattern=north west lines, pattern color=blue] (0,4) rectangle (2,6);
        \draw[pattern=north west lines, pattern color=blue] (6,2) rectangle (8,4);
        \draw[pattern=north west lines, pattern color=blue] (2,6) rectangle (4,8);
        \draw (0,2) -- (8,2);
        \draw (0,4) -- (8,4);
        \draw (0,6) -- (8,6);
        \draw (2,0) -- (2,8);
        \draw (4,0) -- (4,8);
        \draw (6,0) -- (6,8);
      \end{scope}
      \begin{scope}
        [xshift=30cm]
        \node [above] at (4,8) {$\ell=3$};
        \draw (0,0) -- (0,8) -- (8,8) -- (8,0) -- cycle;
        \draw[pattern=north west lines, pattern color=blue] (6,0) rectangle (7,1);
        \draw[pattern=north west lines, pattern color=blue] (4,2) rectangle (5,3);
        \draw[pattern=north west lines, pattern color=blue] (2,4) rectangle (3,5);
        \draw[pattern=north west lines, pattern color=blue] (0,6) rectangle (1,7);
        \draw[pattern=north west lines, pattern color=blue] (7,1) rectangle (8,2);
        \draw[pattern=north west lines, pattern color=blue] (5,3) rectangle (6,4);
        \draw[pattern=north west lines, pattern color=blue] (3,5) rectangle (4,6);
        \draw[pattern=north west lines, pattern color=blue] (1,7) rectangle (2,8);
        
        \draw (0,1) -- (8,1);
        \draw (0,2) -- (8,2);
        \draw (0,3) -- (8,3);
        \draw (0,4) -- (8,4);
        \draw (0,5) -- (8,5);
        \draw (0,6) -- (8,6);
        \draw (0,7) -- (8,7);
        \draw (1,0) -- (1,8);
        \draw (2,0) -- (2,8);
        \draw (3,0) -- (3,8);
        \draw (4,0) -- (4,8);
        \draw (5,0) -- (5,8);
        \draw (6,0) -- (6,8);
        \draw (7,0) -- (7,8);
      \end{scope}
\end{tikzpicture}

\caption{ Block partitions induced by each level of a cluster tree of depth $3$.}\label{fig:blockpart}
\end{figure}
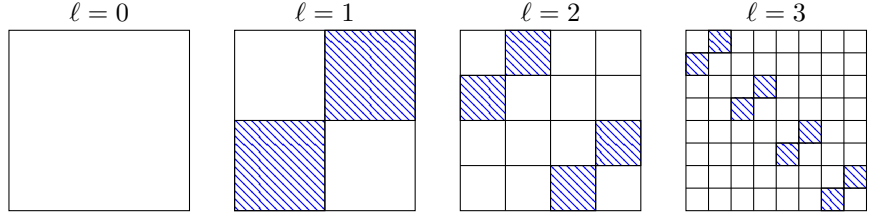

There are $2^\ell$ nodes on level $\ell$ of $\mathcal T_p$ and they partition a matrix $A\in \mathbb C^{n\times n}$ into a $2^\ell \times 2^\ell$ block matrix with the blocks $A(I_i^\ell, I_j^\ell)$ for $i,j = 1,\ldots,2^\ell$. For the HODLR format, only the off-diagonal blocks appearing in the recursive partition~\eqref{eq:hodler1level} are relevant. These are given by
\begin{equation} \label{eq:hodlrblocks}
 A(I_i^\ell,I_j^\ell) \quad \text{and} \quad A(I_j^\ell,I_i^\ell) \quad \text{for} \quad (i, j)=(2, 1), (4, 3), \dots, (2^\ell, 2^\ell-1), \quad \ell = 1,\ldots,p,
\end{equation}
and marked with stripes in Figure~\ref{fig:blockpart}.

\begin{definition} \label{def:hodlr}
Consider a cluster tree $\mathcal T_p$ and let $A\in \mathbb C^{n\times n}$. Then:
\begin{enumerate}
 \item for $k\in \mathbb N$, $A$ is said to be a $(\mathcal T_p,k)$-HODLR matrix if each of the off-diagonal blocks listed in~\eqref{eq:hodlrblocks} has rank at most $k$;
 \item the HODLR rank of $A$ (with respect to $\mathcal T_p$)  is the smallest integer $k$ such that $A$ is a $(\mathcal T_p,k)$-HODLR matrix.
\end{enumerate}
\end{definition}

A $(\mathcal T_p,k)$-HODLR matrix $A$ can be stored efficiently by replacing each off-diagonal block $A(I_i^\ell,I_j^\ell)$ featuring in Definition~\ref{def:hodlr} by its low-rank factorization $U_i^{(\ell)} \big(V_j^{(\ell)}\big)^*$. Both, $U_i^{(\ell)}$ and $V_j^{(\ell)}$ have at most $k$ columns.
In turn, the only full blocks that need to be stored are the diagonal blocks at the lowest level: $A(I_i^p,I_i^p)$ for $i = 1,\ldots,2^p$; see also Figure~\ref{fig:hodlr}.
If $n = 2^p k$ and $n_i = k$ in~\eqref{eq:partn}, $O(nk)$ memory is needed for storing the diagonal blocks as well as the low-rank factors on each level $\ell$. Hence, the storage of such a HODLR matrix requires $O(pnk) = O(kn\log n)$ memory in total.

\begin{figure}[!ht]
	\centering
	\includegraphics[width=0.8\textwidth]{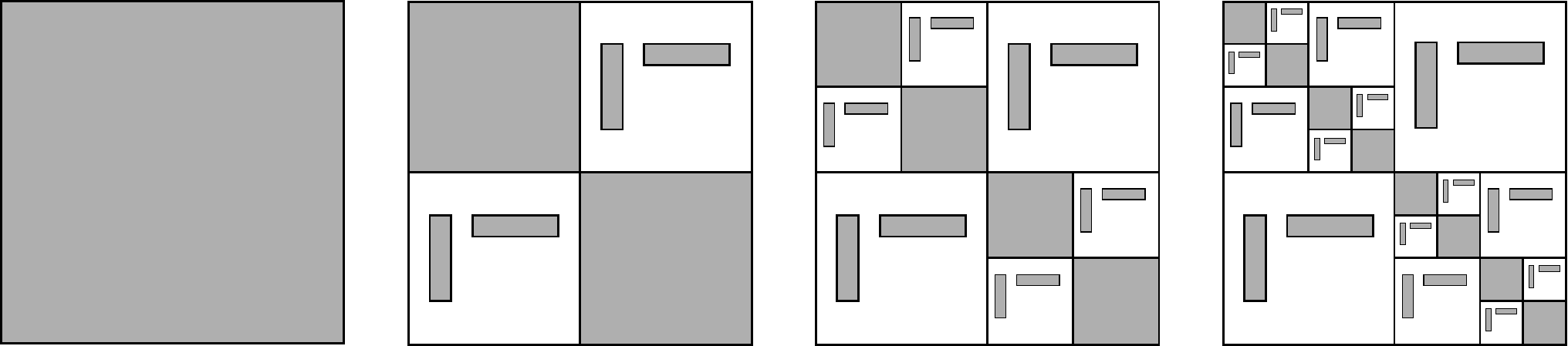}
	\caption{ Pictorial description of the HODLR format for cluster trees of different depths. The sub-blocks filled in gray are dense blocks; the others are stored as low-rank outer products.}\label{fig:hodlr}
\end{figure}

\subsection{Divide-and-conquer in the HODLR format}

We are now prepared to describe the divide-and-conquer method for a Sylvester equation~\eqref{eq:sylv} with ($\mathcal T_p,k$)-HODLR matrices $A$, $B$, $C$. By definition, we can write
\begin{equation} \label{eq:hodlrsplit}
 A = \begin{bmatrix}
      A_{11} & 0 \\
      0 & A_{22}
     \end{bmatrix}
+\begin{bmatrix}
      0 & A_{12} \\
      A_{21} & 0
     \end{bmatrix} = 
\begin{bmatrix}
      A_{11} & 0 \\
      0 & A_{22}
     \end{bmatrix}
+\begin{bmatrix}
      0 & U_1 V_2^*  \\
      U_2 V_1^* & 0
     \end{bmatrix} = 
\begin{bmatrix}
      A_{11} & 0 \\
      0 & A_{22}
     \end{bmatrix}
+U_A V_A^*, 
\end{equation}
where
\begin{equation} \label{eq:uava}
 U_A := \begin{bmatrix}
      U_1 & 0  \\
      0 & U_2 
     \end{bmatrix}, \qquad 
V_A := \begin{bmatrix}
      0 & V_1   \\
      V_2 & 0
     \end{bmatrix}
\end{equation}
have at most $2k$ columns each. The matrices $U_B$, $V_B$, $U_C$, $V_C$ are defined analogously. Note that the diagonal blocks are ($\mathcal T_{p-1},k$)-HODLR matrices for a suitably defined cluster tree $\mathcal T_{p-1}$. After solving (recursively) for these diagonal blocks, we apply the technique described in Algorithm~\ref{alg:split} to incorporate the low-rank corrections for $A$, $B$, $C$. The described procedure is summarized in Algorithm~\ref{alg:dac}.
\begin{algorithm}
	\caption{Divide-and-conquer method for Sylvester equations with HODLR coefficients}\label{alg:dac}
	\begin{algorithmic}[1]
		\Procedure{D\&C\_Sylv}{$A,B,C$}\Comment{Solve $AX+XB=C$}
		\If{$A,B$ are dense matrices}
		\State \Return \Call{dense\_Sylv}{$A,B,C$}
		\Else
		\State Decompose \[A =\begin{bmatrix}
		A_{11}&0 \\ 0&A_{22}
		\end{bmatrix}+U_AV_A^*,\ B=\begin{bmatrix}
		B_{11}&0\\ 0&B_{22}
		\end{bmatrix} + U_BV_B^*,\, C=\begin{bmatrix}
		C_{11}& 0\\ 0&C_{22}
		\end{bmatrix}+U_CV_C^*\]
		
		~~~with $U_A, V_A, U_B,V_B,U_C,V_C$ defined as in~\eqref{eq:uava}.
		\State $X_{11} \gets $ \Call{D\&C\_Sylv}{$A_{11},B_{11},C_{11}$}
			\State $X_{22} \gets $ \Call{D\&C\_Sylv}{$A_{22},B_{22},C_{22}$}
			\State \label{line:X0} Set $X_0\gets\begin{bmatrix}
			X_{11} & 0 \\ 0&X_{22}
			\end{bmatrix}$
		\State \label{line:largerlowrank} Set $U=[U_C, -U_A, -X_0U_B]$ and $V=[V_C, X_0^* V_A, V_B]$. 
		\State \label{line:lrupdate} $\delta X\gets $  \Call{low\_rank\_Sylv}{$A,B,U,V$}
		\State \label{step:hodlrcompress} \Return \Call{Compress}{$X_0+\delta X,\tau$}\Comment{Compression is optional}
		\EndIf
		\EndProcedure
	\end{algorithmic}
\end{algorithm}

When implementing Algorithm~\ref{alg:dac} it is advisable to recompress the right hand side $UV^*$, obtained in line~\ref{line:largerlowrank}, using the procedure described in Section~\ref{sec:compress}. 
Similarly, Line~\ref{step:hodlrcompress} aims at recompressing the entire HODLR matrix to mitigate the increase of the HODLR rank due to the addition of $\delta X$. For this purpose, the procedure from Section~\ref{sec:compress} is applied, with truncation tolerance $\tau$, to each off-diagonal block~\eqref{eq:hodlrblocks}. This step is optional because it is expensive and we cannot expect a significant rank  reduction for Sylvester equations with general HODLR coefficients; see also Remark~\ref{rem:rank-growth} below. 

When Algorithm~\ref{alg:dac} is applied to a stable Lyapunov equation, the techniques from Section~\ref{sec:stable} are employed in Line~\ref{line:lrupdate} in order to preserve the symmetry of $\delta X$. Note, however, that Algorithm~\ref{alg:dac} does not preserve definiteness, that is, $\delta X$ is, in general, not positive semidefinite. We pose it as an open problem to design a divide-and-conquer method that has this desirable property, implying that the solution is approached monotonically from below.

\subsubsection{A priori bounds on the HODLR rank of $X$}

The numerical rank of the correction $\delta X$, computed in line~\ref{line:lrupdate}, can be bounded using the tools introduced in Section~\ref{sec:low-rank}. 
\begin{lemma} \label{thm:uniform-approximability}
	Let $A,B,C\in\mathbb C^{n\times n}$ be $(\mathcal T_p,k)$-HODLR matrices and suppose that $\mathcal W(A)\subseteq E$, $\mathcal W(-B)\subseteq F$ for sets $E,F \subset \mathbb C$ satisfying $E\cap F=\emptyset$. and run Algorithm~\ref{alg:dac} with input arguments $A,B$ and $C$. Then for every recursion of Algorithm~\ref{alg:dac}, the correction $\delta X$ satisfies
	\begin{equation}\label{eq:uniform-approximability}
	\frac{\sigma_{6kh+1}(\delta X)}{\norm{\delta X}_2}\leq \big(1+\sqrt 2\big)\cdot Z_{h}(E,F).
	\end{equation}
\end{lemma}
\begin{proof}
As the matrices $A$ and $B$ appearing in each recursion of Algorithm~\ref{alg:dac} are principal submatrices of the input matrices $A$ and $B$, their numerical ranges are contained in $E$ and $-F$, respectively. Moreover, the rank of the right hand-side $UV^*$ is bounded by $6k$. Thus, applying  Theorem~\ref{thm:beckermann} to $A\,\delta X + \delta X\,B = UV^*$ establishes the claim.
\end{proof}

We now use Lemma~\ref{thm:uniform-approximability} to derive an apriori approximation result for $X$. Let $\delta X_\ell \in \mathbb C^{n\times n}$ be the block diagonal matrix for which the diagonal blocks contain all corrections computed at recursion level $\ell \le p-1$ of Algorithm~\ref{alg:dac}. Note that the block partitioning of $\delta X_\ell$ corresponds to level $\ell$ of $\mathcal T_p$; see also Figure~\ref{fig:blockpart}. Similarly, let $X_0\in \mathbb C^{n\times n}$ be the block diagonal matrix that contains all the solutions of dense Sylvester equations at level $p$. Then
$
 X = X_0 + \delta X_0 + \cdots + \delta X_{p-1}.
$
Given $\tilde \epsilon > 0$, suppose that $h$ is chosen such that $(1+\sqrt 2)Z_h(E,F)\leq \tilde \epsilon$. Lemma~\ref{thm:uniform-approximability} applied to each diagonal block of $\delta X_\ell$ implies that there is a block diagonal matrix $\delta \tilde X_j$, with the same block structure as $\delta X_\ell$, such that each diagonal block has rank at most $6kh$ and $\norm{\delta X_\ell -\delta\widetilde X_\ell}_2 \le \tilde \epsilon \|\delta X_\ell\|_2$. By construction,
$
 \tilde X = X_0 + \delta \tilde X_0 + \cdots + \delta \tilde X_{p-1}
$
is a $(\mathcal T_p,6kh p)$-HODLR matrix such that $\|X-\tilde X\|_2 \le \tilde \epsilon p \max_\ell \|\delta X_\ell\|_2$. This establishes the following result.
\begin{corollary}\label{cor:rank-increase}
	Under the assumptions of Lemma~\ref{thm:uniform-approximability}, let $X$ be the solution of~\eqref{eq:sylv} and suppose that the norm of all corrections computed by Algorithm~\ref{alg:dac} is bounded by $M$. Given $\epsilon>0$, let $h$ be the smallest integer that satisfies $(1+\sqrt 2)Z_h(E,F)\leq \frac{\epsilon }{pM}$. Then there exists a $(\mathcal T_p,6kh p)$-HODLR matrix $\widetilde X$  such that
	$
	\norm{X-\widetilde X}_2\leq \epsilon.
	$
\end{corollary}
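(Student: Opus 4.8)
The plan is to formalize the telescoping argument sketched just before the statement. First I would record the exact output of the recursion,
\[
X = X_0 + \delta X_0 + \cdots + \delta X_{p-1},
\]
where $X_0$ collects the dense leaf-level solutions and each $\delta X_\ell$ is the matrix, block diagonal with respect to level $\ell$ of $\mathcal T_p$, whose diagonal blocks are the corrections returned in Line~\ref{line:lrupdate} of Algorithm~\ref{alg:dac} at recursion level $\ell$. The reason for isolating the corrections level by level is that every individual diagonal block of $\delta X_\ell$ is exactly the solution $\delta X$ of a correction equation $A\,\delta X+\delta X\,B=UV^*$ to which Lemma~\ref{thm:uniform-approximability} applies verbatim, its numerical-range hypotheses being inherited by the principal submatrices $A,B$ produced along the recursion.

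Next I would convert the singular-value estimate of Lemma~\ref{thm:uniform-approximability} into a genuine low-rank approximation. Putting $\tilde\epsilon:=\epsilon/(pM)$ and choosing $h$ as in the statement guarantees $(1+\sqrt2)\,Z_h(E,F)\le\tilde\epsilon$ --- such an $h$ exists because $Z_h(E,F)\to 0$ as $h\to\infty$ whenever $E\cap F=\emptyset$ --- and hence $\sigma_{6kh+1}(\delta X)\le\tilde\epsilon\,\norm{\delta X}_2$ for every diagonal block $\delta X$ appearing in any $\delta X_\ell$. Truncating the SVD of each such block to rank $6kh$ then produces a block-diagonal approximant $\delta\widetilde X_\ell$, sharing the block structure of $\delta X_\ell$, all of whose diagonal blocks have rank at most $6kh$. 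Because the $2$-norm of a block-diagonal matrix equals the largest of the $2$-norms of its diagonal blocks, this block-wise truncation delivers $\norm{\delta X_\ell-\delta\widetilde X_\ell}_2\le\tilde\epsilon\,\norm{\delta X_\ell}_2\le\tilde\epsilon M$, the last inequality using that every correction has norm at most $M$.

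I would then set $\widetilde X:=X_0+\delta\widetilde X_0+\cdots+\delta\widetilde X_{p-1}$ and control the error by the triangle inequality,
\[
\norm{X-\widetilde X}_2\le\sum_{\ell=0}^{p-1}\norm{\delta X_\ell-\delta\widetilde X_\ell}_2\le p\,\tilde\epsilon M=\epsilon,
\]
which is the claimed accuracy. The remaining, and to my mind most delicate, step is to certify that $\widetilde X$ is a $(\mathcal T_p,6khp)$-HODLR matrix by bounding the rank of each off-diagonal block listed in~\eqref{eq:hodlrblocks}. Here I would argue that a level-$\ell$ off-diagonal block of $\widetilde X$ receives a contribution only from the coarser corrections $\delta\widetilde X_m$ with $m<\ell$: for $m\ge\ell$ the matrix $\delta\widetilde X_m$ is block diagonal with respect to level $m$, a partition at least as fine as level $\ell$, so it vanishes on the coupling between two distinct level-$\ell$ sibling clusters, and $X_0$ likewise contributes nothing off the leaf diagonal. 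Each of the at most $p$ contributing corrections supplies a submatrix of rank at most $6kh$, so subadditivity of the rank bounds every such off-diagonal block by $\ell\cdot 6kh\le 6khp$. This bookkeeping over the cluster tree, rather than any analytic estimate, is the crux of the argument; once it is in place the HODLR-rank claim holds and the proof is complete.
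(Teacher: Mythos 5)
Your proof is correct and follows essentially the same route as the paper: the level-wise decomposition $X = X_0 + \delta X_0 + \cdots + \delta X_{p-1}$, blockwise SVD truncation of each correction via Lemma~\ref{thm:uniform-approximability} with $\tilde\epsilon = \epsilon/(pM)$, and the triangle inequality over the $p$ levels. The only difference is that you spell out the cluster-tree bookkeeping showing each level-$\ell$ off-diagonal block of $\widetilde X$ picks up rank at most $6kh$ from each of the coarser levels $m<\ell$ (and nothing from finer levels or $X_0$), a step the paper compresses into the phrase ``by construction.''
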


\begin{remark}\label{rem:rank-growth}
To turn Corollary~\ref{cor:rank-increase} into an asymptotic statement on the HODLR rank as $n\to \infty$, one needs to assume a model for the behavior of $E$, $F$ as $n\to \infty$. In the simplest case, $E,F$ stay constant; for example, when $A$ and $B$ are symmetric positive definite and their condition numbers remain bounded as $n\to \infty$. In this case, the integer $h$ from Corollary~\eqref{cor:rank-increase} is constant and, in turn, the HODLR rank of the approximate solution is $\mathcal O(k\log(n))$. Numerical tests, involving random HODLR matrices which satisfy these assumptions, indicate that the factor $\log(n)$ is \emph{in general} not avoidable. 

In many cases of practical interest, $A$ and $B$ are symmetric positive definite but their condition numbers grow polynomially with respect to $n$. For example, the condition number of $A=B=\trid(-1,2,-1)$ is $\mathcal O(n^2)$. Using the result of  Corollary~\ref{cor:decay} one now has $h = \mathcal O(\log(n))$ and, in turn,  Corollary~\ref{cor:rank-increase} yields the HODLR rank $\mathcal O(k\log^2(n))$.

  	\end{remark}
\subsubsection{Complexity of divide-and-conquer in the HODLR format}\label{sec:complexity}

The complexity of Algorithm~\ref{alg:dac} depends on the convergence of the extended Krylov subspace method used for solving the correction equation in Line~\ref{line:lrupdate} and, in turn, on spectral properties of $A,B$; see~\cite{Beckermann2011,Knizhnerman2011}. To still give some insights into the complexity, we make the following simplifying assumptions:
  \begin{enumerate}[label=(\roman*)]
  	\item the conditions of Lemma~\ref{thm:uniform-approximability} are satisfied for sets $E,F$ independent of $n$, and Algorithm~\ref{alg:kryl} converges in a constant number of iterations;
	\item[(ii)] $n=2^ps$ and the input matrices $A,B$ and $C$ are $(\mathcal T_p,k)$-HODLR matrices;
	\item[(iii)] $\mathcal T_p$ is the perfectly balanced binary tree of depth $p$,
	\item[(iv)] the compression in Line~\ref{step:hodlrcompress} of Algorithm~\ref{alg:dac}  is \emph{not} performed.
\end{enumerate}
We recall that the LU decomposition of a $(\mathcal T_p,k)$-HODLR matrix requires $\mathcal O(k^2n\log^2(n))$ operations, while multiplying a vector with such a matrix requires $\mathcal O(kn\log(n))$ operations; see, e.g.,~\cite{Hackbusch2015}.

Now, let $\mathcal C(n,k)$ denote the complexity of Algorithm~\ref{alg:dac}. 
Assumption (i) implies that the cost of Algorithm~\ref{line:lrupdate}, called at  Line~\ref{line:lrupdate} is $\mathcal O(k^2n\log^2(n))$, because it is dominated by the cost of precomputing the LU decompositions for $A$ and $B$. According to Corollary~\ref{cor:rank-increase} and Remark~\ref{rem:rank-growth}, Assumption (i) also implies that $X_0$, see Line~\ref{line:X0}, has HODLR rank $\mathcal O(k\log(n))$. This, together with the fact that $U_B$ and $V_A$ each have $2k$ columns, shows that the matrix multiplications $X_0U_B$ and $X_0^*V_A$ at Line~\ref{line:largerlowrank} require $\mathcal O(k^2n\log^2(n))$ operations. 
Finally, observe that the solution of a dense Sylvester equation with $s\times s$ coefficients requires $\mathcal O(s^3)$ operations. In summary, 
\[
\mathcal C(n,k)= \begin{cases}
\mathcal O(s^3)&\text{if }n= s,\\
\mathcal O(k^2n\log^2(n)) + 2\mathcal C(\frac n2,k)&\text{otherwise}.
\end{cases}
\]
Applying the master theorem \cite[Theorem 4.1]{cormen} to this  relation yields  $\mathcal C(n,k)=\mathcal O(k^2n\log^3(n))$. 

\subsection{HSS matrices}

The storage of a matrix of HODLR rank $k$ requires $\mathcal O(k n\log n)$ memory in the HODLR format. Under stronger conditions on the matrix, the factor $\log n$ can be avoided by using nested hierarchical low-rank formats, such as the HSS format.

An HSS matrix is partitioned in the same way as a HODLR matrix. By Definition~\ref{def:hodlr}, 
a matrix $A$ is a $(\mathcal T_p,k)$-HODLR matrix if and only if every off-diagonal block $A(I^\ell_i,I^\ell_{j})$, $i\not=j$, has rank at most $k$. Thus, we have a factorization
\[
   A(I^\ell_i,I^\ell_j) = U_i^{(\ell)} S_{i,j}^{(\ell)} (V_j^{(\ell)})^*, \quad S_{i,j}^{(\ell)} \in\mathbb C^{k\times k}, \quad U_i^{(\ell)}\in \mathbb C^{n_i^{(\ell)} \times k},\quad V_j^{(\ell)} \in \mathbb C^{n_j^{(\ell)} \times k},
\]
where we assume exact rank $k$ to simplify the description.
The crucial extra assumption for HSS matrices is that the bases matrices of these low-rank representations are nested across the different levels. That is, one assumes that there exist matrices, the so called \emph{translation operators}, $R_{U,i}^{(\ell)}, R_{V,j}^{(\ell)}\in\mathbb C^{2k\times k}$ such that
\begin{equation}\label{eq:HSS_nestedness}
    U_i^{(\ell)} = \begin{bmatrix} U_{2i-1}^{(\ell+1)} & 0 \\ 0 & U_{2i}^{(\ell+1)} \end{bmatrix} R_{U,i}^{(\ell)}, \qquad
    V_j^{(\ell)} = \begin{bmatrix} V_{2j-1}^{(\ell+1)} & 0 \\ 0 & V_{2j}^{(\ell+1)} \end{bmatrix} R_{V,j}^{(\ell)}. 
\end{equation}
This nestedness condition allows to construct the bases $U_i^{(\ell)}$ and $V_i^{(\ell)}$ for any level $\ell=1,\ldots,p-1$ recursively from the bases $U_i^{(p)}$ and $V_i^{(p)}$ at the deepest level $p$ using the matrices $R_{U,i}^{(\ell)}$ and $R_{V,j}^{(\ell)}$. In turn, one only needs $O(nk)$ memory to represent $A$, for storing the diagonal blocks $A(I^p_i,I^p_i)$, the bases $U_i^{(p)}$, $V_i^{(p)}$ as well as $S_{2i-1,2i}^{(\ell)}$, $S_{2i,2i-1}^{(\ell)}$, $R_{U,i}^{(\ell)}$, $R_{V,i}^{(\ell)}$.

As explained in~\cite{Xia2010}, the described construction is possible if and only if all the corresponding block rows and columns, without their diagonal blocks, have rank at most $k$ on every level. The following definition formalizes and extends this condition.
\begin{definition} \label{def:hss}
Consider a cluster tree $\mathcal T_p$ and let $A\in \mathbb C^{n\times n}$. Then:
\begin{enumerate}
 \item[(a)] $A(I^\ell_i,I\setminus I^\ell_i)$ is called an HSS block row and $A(I\setminus I^\ell_i,I^\ell_i)$ is called an HSS block column for $i = 1,\ldots,2^{\ell}$, $\ell = 1,\ldots,p$.
 \item[(b)] For $k\in \mathbb N$, $A$ is called a $(\mathcal T_p,k)$-HSS matrix if every HSS block row and column of $A$ has rank at most $k$.
 \item[(c)] The HSS rank of $A$ (with respect to $\mathcal T_p$) is the smallest integer $k$ such that $A$ is a $(\mathcal T_p,k)$-HSS matrix.
\item[(d)] For $k\in \mathbb N$ and $\epsilon > 0$, $A$ is called an $\epsilon$-$(\mathcal T_p,k)$-HSS matrix if every HSS block row and column of $A$ admits an $\epsilon$-approximation of rank $k$. 
\end{enumerate}
\end{definition}

\noindent By Definition~\ref{def:hss} (b), a matrix $A$ with bandwidth $b$ (i.e., $a_{ij} = 0$ for $|i-j|>b$) has HSS rank at most $2b$. 
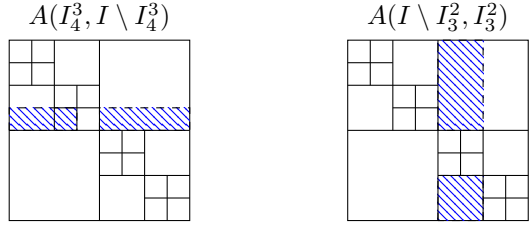
\begin{figure}[!ht]
\centering
\begin{tikzpicture}[scale=0.3]    

      \begin{scope}
        [xshift=0cm]
        \node [above] at (4,8) {$A(I^3_4,I\setminus I^3_4)$};
        \draw (0,0) -- (0,8) -- (8,8) -- (8,0) -- cycle;

        \draw[dashed, pattern=north west lines, pattern color=blue] (0,4) rectangle (3,5);
         \draw[dashed, pattern=north west lines, pattern color=blue] (4,4) rectangle (8,5);
        \draw (6,1) -- (8,1);
        \draw (4,2) -- (8,2);
        \draw (4,3) -- (6,3);
        \draw (0,4) -- (8,4);
        \draw (2,5) -- (4,5);
        \draw (0,6) -- (4,6);
        \draw (0,7) -- (2,7);
        \draw (1,6) -- (1,8);
        \draw (2,4) -- (2,8);
        \draw (3,4) -- (3,6);
        \draw (4,0) -- (4,8);
        \draw (5,2) -- (5,4);
        \draw (6,0) -- (6,4);
        \draw (7,0) -- (7,2);
      \end{scope}
      
      \begin{scope}
        [xshift=15cm]
        \node [above] at (4,8) {$A(I\setminus I^2_3,I^2_3)$};
        \draw (0,0) -- (0,8) -- (8,8) -- (8,0) -- cycle;
        \draw[dashed, pattern=north west lines, pattern color=blue] (4,4) rectangle (6,8);
        \draw[dashed, pattern=north west lines, pattern color=blue] (4,0) rectangle (6,2);
        
        \draw (6,1) -- (8,1);
        \draw (4,2) -- (8,2);
        \draw (4,3) -- (6,3);
        \draw (0,4) -- (8,4);
        \draw (2,5) -- (4,5);
        \draw (0,6) -- (4,6);
        \draw (0,7) -- (2,7);
        \draw (1,6) -- (1,8);
        \draw (2,4) -- (2,8);
        \draw (3,4) -- (3,6);
        \draw (4,0) -- (4,8);
        \draw (5,2) -- (5,4);
        \draw (6,0) -- (6,4);
        \draw (7,0) -- (7,2);
      \end{scope}

\end{tikzpicture}

\caption{ Illustration of an HSS block row and an HSS block column for a cluster tree of depth $3$.}\label{fig:hsscolrow}
\end{figure}

It is intuitive that a matrix satisfying Definition~\ref{def:hss} (d) can be approximated by a $(\mathcal T_p,k)$-HSS matrix with an error of norm proportional to $\epsilon$. Such a result has been shown for the Frobenius norm in~\cite[Corollary 4.3]{Xi2014}. In the following, we show such a result for the matrix 2-norm, with a constant that is tighter than what can be directly concluded from~\cite{Xi2014}. For this purpose, we first introduce some notation and preliminary results. In the following, we say that a block diagonal matrix $D$ conforms with $\mathcal T_p$ if it takes the form
\[
 D = D_1 \oplus D_2 \oplus \cdots \oplus D_{2^p}, \qquad  D_i \in \mathbb C^{n_i^{(p)}\times k_i}
\]
for integers $k_i \le n_i^{(p)}$. In particular, this ensures that the multiplications $D^T A$ and $AD$ do not mix different HSS block rows and columns, respectively.
In the following lemma, we let $\mathcal T^{(k)}_p$ denote the tree associated with the partition $k_1 + k_2 + \cdots k_{2^p}$.
\begin{lemma} \label{lemma:prophss}
Let $A$ be an $\epsilon$-$(\mathcal T_p,k)$-HSS matrix. Then:
\begin{enumerate}
 \item[(a)] $UU^T A$ and $U^T A V$ are $\epsilon$-$(\mathcal T_p,k)$-HSS and $\epsilon$-$(\mathcal T^{(k)}_p,k)$-HSS matrices, respectively, for any block diagonal matrices $U$, $V$ conforming with $\mathcal T_p$ and satisfying $U^T U = V^TV = I$.
 \item[(b)] $A$ is an $\epsilon$-$(\mathcal T_{p-1},k)$-HSS matrix for the tree $\mathcal T_{p-1}$ obtained from $\mathcal T_{p}$ by omitting all leaves.
\end{enumerate}
\end{lemma}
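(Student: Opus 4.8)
The plan is to reduce both claims to a single elementary fact: multiplying a fixed matrix on the left or right by a block-diagonal matrix with orthonormal columns that conforms with $\mathcal T_p$ neither increases the rank nor expands the $2$-norm, and — crucially — it acts \emph{blockwise} on each HSS block row and column. Since the defining property of an $\epsilon$-$(\mathcal T_p,k)$-HSS matrix is stated purely in terms of the HSS block rows $A(I_i^\ell,I\setminus I_i^\ell)$ and columns $A(I\setminus I_i^\ell,I_i^\ell)$, it suffices to inspect how each such block transforms under the operations in the statement.

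For part (a), first I would record that $U=\diag(U_1,\dots,U_{2^p})$ with $U^TU=I$ forces each diagonal block to satisfy $U_j^TU_j=I$, so that $UU^T=\diag(U_1U_1^T,\dots,U_{2^p}U_{2^p}^T)$ is block diagonal with orthogonal projectors on the diagonal and $\norm{UU^T}_2\leq 1$. The key observation is that, because every cluster $I_i^\ell$ is a union of whole leaf clusters, the block-diagonal matrix $UU^T$ never couples $I_i^\ell$ with its complement. Hence its restriction to an HSS block row satisfies $(UU^TA)(I_i^\ell,I\setminus I_i^\ell)=P_i^\ell\,A(I_i^\ell,I\setminus I_i^\ell)$, where $P_i^\ell:=(UU^T)(I_i^\ell,I_i^\ell)$ is again an orthogonal projector, with the analogous identity for HSS block columns. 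If $Y$ has rank at most $k$ and $\norm{A(I_i^\ell,I\setminus I_i^\ell)-Y}_2\leq\epsilon$, then $P_i^\ell Y$ has rank at most $k$ and, since $\norm{P_i^\ell}_2\leq1$, it approximates the transformed block to within $\epsilon$. This shows $UU^TA$ is $\epsilon$-$(\mathcal T_p,k)$-HSS.

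For $U^TAV$ I would proceed similarly, tracking that the row cluster $\hat I_i^\ell$ of the compressed matrix corresponds to the columns of $U$ supported on $I_i^\ell$ and the column cluster to those of $V$. Writing $W_i$ and $Z_{\bar i}$ for the (block-diagonal, orthonormal-column) restrictions of $U$ to $I_i^\ell$ and of $V$ to $I\setminus I_i^\ell$, one obtains $(U^TAV)(\hat I_i^\ell,\hat I\setminus\hat I_i^\ell)=W_i^T\,A(I_i^\ell,I\setminus I_i^\ell)\,Z_{\bar i}$, a two-sided multiplication by factors of norm at most $1$. The same rank- and error-preservation argument yields an $\epsilon$-approximation of rank $k$, and the induced row and column cluster sizes are exactly the $k_i$, so $U^TAV$ is $\epsilon$-$(\mathcal T_p^{(k)},k)$-HSS.

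Part (b) I expect to be immediate: the tree $\mathcal T_{p-1}$ shares with $\mathcal T_p$ all clusters $I_i^\ell$ at levels $\ell=0,\dots,p-1$, and its HSS block rows and columns (now running over $\ell=1,\dots,p-1$) form literally a subset of those of $\mathcal T_p$. Each of them therefore already admits an $\epsilon$-approximation of rank $k$ by hypothesis, so no new estimate is required. I expect the only genuine obstacle to be the bookkeeping: carefully verifying the restriction identity $(UU^TA)(I_i^\ell,I\setminus I_i^\ell)=P_i^\ell A(I_i^\ell,I\setminus I_i^\ell)$ and its two-sided analogue, which hinge precisely on $U$ and $V$ conforming with $\mathcal T_p$ so that they never mix a cluster with its complement. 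Once these identities are in place, the analytic content is merely the non-expansiveness of orthogonal projectors and orthonormal factors in the $2$-norm.
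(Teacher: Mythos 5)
Your proposal is correct and follows essentially the same route as the paper's proof: both rest on the observation that, because $U$ and $V$ conform with $\mathcal T_p$, the products $UU^TA$ and $U^TAV$ act on each HSS block row and column by one- or two-sided multiplication with orthogonal projectors (resp.\ matrices with orthonormal columns), which preserves rank-$k$ approximability without amplifying the $\epsilon$-error, and both dispatch part (b) by noting that the HSS blocks of $\mathcal T_{p-1}$ are a subset of those of $\mathcal T_p$. The only difference is cosmetic: you phrase the approximation via a rank-$k$ approximant $Y$ while the paper phrases it via a perturbation $C$ with $\|C\|_2\le\epsilon$, which is trivially equivalent.
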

\begin{proof} (a). Consider a node $I^\ell_i$ of $\mathcal T_p$ and the corresponding node $\tilde I^\ell_i$ of $\mathcal T^{(k)}_p$. 

Because of the block diagonal structure of $U$, an HSS block row of $UU^T A$ takes the form $\Pi A(I^\ell_i,I\setminus I^\ell_i)$, where 
$\Pi = U(I^\ell_i,\tilde I^\ell_i) U(I^\ell_i,\tilde I^\ell_i)^T$ is an orthogonal projector.
By assumption, there is a perturbation $C$ with $\|C\|_2 \le \epsilon$ such that $A(I^\ell_i,I\setminus I^\ell_i)+C$ has rank at most $k$. 
In turn, $\Pi A(I^\ell_i,I\setminus I^\ell_i) + \Pi C$ also has rank at most $k$ with $\|\Pi C\|_2\le \|\Pi\|_2 \|C\|_2 = \|C\|_2 \le \epsilon$.
By an analogous argument, the HSS block column $U(I\setminus I^\ell_i,\tilde I\setminus \tilde I^\ell_i) U(I\setminus I^\ell_i,\tilde I\setminus \tilde I^\ell_i)^T A(I\setminus I^\ell_i,I^\ell_i)$  is shown to admit a rank-$k$ approximation of norm $\epsilon$. This proves that $UU^T A$ is an $\epsilon$-$(\mathcal T_p,k)$-HSS matrix.

Now, consider an HSS block row of $U^T A V$ given by $U(I^\ell_i,\tilde I^\ell_i)^T A(I^\ell_i,I\setminus I^\ell_i) V(I\setminus I^\ell_i,\tilde I\setminus \tilde I^\ell_i)$, where both, the left and right factors, have orthonormal columns because of the structure of $U$, $V$. Using the matrix $C$ from above, set \[ \tilde C := U(I^\ell_i,\tilde I^\ell_i)^T C (I\setminus I^\ell_i,\tilde I \setminus \tilde I^\ell_i) V(I\setminus I^\ell_i,\tilde I\setminus \tilde I^\ell_i).\] This perturbation reduces the rank of the HSS block row to at most $k$ and has norm bounded by $\epsilon$ because of the ortogonality of $U,V$. By swapping the roles of $U$ and $V$, the same holds for an HSS block column of $U^T A V$. This proves that $U^T AV$ is an $\epsilon$-$(\mathcal T^{(k)}_p,k)$-HSS matrix.

(b). This part trivially holds because the block rows and columns corresponding to $\mathcal T_{p-1}$ are a subset of the block rows and columns corresponding to $\mathcal T_{p}$.
\end{proof}

\begin{theorem}\label{thm:hss-eps}
 Let $A \in \mathbb C^{n\times n}$ be an $\epsilon$-$(\mathcal T_p,k)$-HSS matrix. Then there exists $\delta A \in \mathbb C^{n\times n}$ with $\|\delta A\|_2 \le \sqrt{2^{p+2}-4} \cdot \epsilon $ such that $A+\delta A$ is a $(\mathcal T_p,k)$-HSS matrix.
\end{theorem}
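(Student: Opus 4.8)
The plan is to argue by induction on the depth $p$ of the cluster tree, peeling off one level at a time with the help of Lemma~\ref{lemma:prophss}. For $p=0$ there are no HSS block rows or columns, so $\delta A = 0$ works and $\sqrt{2^{2}-4}=0$; this anchors the induction. For the inductive step I first compress the leaf level: for every leaf $i=1,\dots,2^p$ let $U_i$ (with $k$ orthonormal columns) span the dominant left singular subspace of the HSS block row $R_i := A(I_i^p, I\setminus I_i^p)$, and let $V_i$ span the dominant right singular subspace of the HSS block column $C_i := A(I\setminus I_i^p, I_i^p)$. Because these blocks admit rank-$k$ $\epsilon$-approximations, we have $\norm{(I-U_iU_i^*)R_i}_2\le\epsilon$ and $\norm{C_i(I-V_iV_i^*)}_2\le\epsilon$. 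Collecting the $U_i,V_i$ into block-diagonal orthonormal matrices $U,V$ conforming with $\mathcal T_p$, Lemma~\ref{lemma:prophss}(a) shows that $G := U^*AV$ is an $\epsilon$-$(\mathcal T^{(k)}_p,k)$-HSS matrix, and part (b) then shows it is an $\epsilon$-$(\mathcal T^{(k)}_{p-1},k)$-HSS matrix. The induction hypothesis applied to $G$ yields a perturbation $\delta G$ with $\norm{\delta G}_2\le\sqrt{2^{p+1}-4}\,\epsilon$ such that $\hat G := G+\delta G$ is a genuine $(\mathcal T^{(k)}_{p-1},k)$-HSS matrix.

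Writing $P := UU^*$ and $Q := VV^*$ for the block-diagonal orthogonal projectors, and $M$ for the matrix obtained from $A$ by zeroing the $2^p$ leaf diagonal blocks, I build the approximant in three stages: compress the leaf rows, $A\mapsto A-(I-P)M$; compress the leaf columns by subtracting $PM(I-Q)$; and finally add the lifted coarse correction $U\,\delta G\,V^*$. This produces
\[
 \hat A \;=\; A - (I-P)M - PM(I-Q) + U\,\delta G\,V^*,
\]
whose off-diagonal leaf blocks are $U_i\hat G_{ij}V_j^*$ (rank $\le k$) and whose HSS block rows and columns on the coarser levels $1,\dots,p-1$ coincide, off the diagonal, with those of $U\hat G V^*$; since $\hat G$ is HSS and $U,V$ are block diagonal, these have rank $\le k$ as well. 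Hence $\hat A$ is $(\mathcal T_p,k)$-HSS, and I set $\delta A := \hat A - A$.

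For the error, denote the three correction terms by $T_A := (I-P)M$, $T_B := PM(I-Q)$, and $T_C := U\,\delta G\,V^*$. Grouping the $2^p$ disjoint block rows of $T_A$ (respectively, the $2^p$ disjoint block columns of $T_B$) and using the singular-value bounds above gives $\norm{T_A}_2^2\le 2^p\epsilon^2$ and $\norm{T_B}_2^2\le 2^p\epsilon^2$, while $\norm{T_C}_2=\norm{\delta G}_2\le\sqrt{2^{p+1}-4}\,\epsilon$. The decisive point is that the three terms combine in a Pythagorean, rather than merely triangle-inequality, fashion: the columns of $T_A$ lie in $\range(I-P)$ whereas those of $T_B$ and $T_C$ lie in $\range(P)$ (here one uses $(I-P)U=0$), so $T_A^*(T_C-T_B)=0$; and the rows of $T_B$ lie in $\range(I-Q)$ whereas those of $T_C$ lie in $\range(Q)$ (using $(I-Q)V=0$), so $T_BT_C^*=0$. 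Two applications of the elementary fact that orthogonal column (or row) spaces make squared $2$-norms additive then give
\[
 \norm{\delta A}_2^2 \le \norm{T_A}_2^2 + \norm{T_B}_2^2 + \norm{T_C}_2^2 \le \big(2^{p+1}+(2^{p+1}-4)\big)\epsilon^2 = (2^{p+2}-4)\,\epsilon^2,
\]
which closes the induction.

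I expect the orthogonality bookkeeping to be the main obstacle. A direct triangle-inequality estimate loses a constant factor and yields a bound that accumulates like $\sum_\ell 2^{\ell/2}\epsilon$ across the levels, which is far weaker than $\sqrt{2^{p+2}-4}\,\epsilon$; recovering the sharp constant hinges on verifying that the leaf row-compression, the leaf column-compression, and the lifted coarse correction occupy mutually orthogonal column/row subspaces. The remaining points — that zeroing the leaf diagonal blocks does not spoil these orthogonality relations, and that the block-diagonal structure of $U,V$ transfers the coarse HSS ranks of $\hat G$ to $\hat A$ unchanged — are routine once the projector identities $(I-P)U=0$ and $(I-Q)V=0$ are recorded.
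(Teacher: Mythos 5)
Your proposal is correct and follows essentially the same route as the paper's proof: induction on the tree depth, block-diagonal orthonormal compression matrices $U,V$ built from the leaf-level HSS block rows/columns, an application of Lemma~\ref{lemma:prophss} to pass the compressed matrix to the coarser tree, and the same two-stage Pythagorean (column-space, then row-space, orthogonality) estimate that yields the sharp constant $\sqrt{2^{p+2}-4}$. The only deviations are cosmetic: you compress the block columns of $M$ directly instead of those of $UU^*A_{\mathrm{off}}$ (recovering the needed bound via $\|PM(I-Q)\|_2\le\|M(I-Q)\|_2$), and you apply the induction hypothesis to $U^*AV$ rather than $U^*A_{\mathrm{off}}V$, which is harmless since the two differ only in leaf diagonal blocks that never enter any HSS block row or column.
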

\begin{proof}
The result is proven by induction on the tree depth $p$. The result trivially holds for $p = 0$.

Let us now consider $p\ge 1$ and suppose that the result holds for any $\epsilon$-$(\mathcal T_{p-1},k)$-HSS matrix and for any tree $\mathcal T_{p-1}$ of depth $p-1$. To establish the result for a tree $\mathcal T_{p}$ of depth $p$, we consider the off-diagonal part $A_{\text{off}}$, that is, $A_{\text{off}}$ is obtained from $A$ by setting the diagonal blocks $A(I_i^p,I_i^p)$ to zero for $i = 1,\ldots,2^p$. This allows us to consider the complete block rows $A_{\text{off}}(I_i^p,I) \in \mathbb C^{n_i^{(p)} \times n}$ instead of the depth-$p$ HSS block rows of $A$. Let
$U_i \in \mathbb C^{n_i^{(p)}\times k}$ contain the $k$ left dominant singular vectors of $A_{\text{off}}(I_i^p,I)$ (if $k\le n_i^{(p)}$, we set $U_i = I_{n_i^{(p)}}$). Because 
$A_{\text{off}}$ is an $\epsilon$-$(\mathcal T_p,k)$-HSS matrix, it holds that
$
\|(I-U_i U_i^T) A_{\text{off}}(I_i^p,I)\|_2 \le \epsilon.
$
The block diagonal matrix $U := U_1 \oplus \cdots \oplus U_{2^p}$ conforms with $\mathcal T_p$ and it is such that
\begin{equation} \label{eq:projectorsleft}
  \|(I-UU^T) A_{\text{off}} \|_2 \le \sqrt{2^p} \epsilon.
\end{equation}
By Lemma~\ref{lemma:prophss}, $UU^T A_{\text{off}}$ is again an $\epsilon$-$(\mathcal T_p,k)$-HSS matrix. This allows us to apply analogous arguments to the depth-$p$ HSS block columns of $UU^T A_{\text{off}}$, yielding a block diagonal matrix $V$ conforming with $\mathcal T_p$ such that
$UU^T  A_{\text{off}} VV^T$ has depth-$p$ HSS block rows/columns of rank at most $k$, and
\[
  \|UU^T  A_{\text{off}}\, (I-VV^T)  \|_2 \le \sqrt{2^p} \epsilon.
\]

Using the notation from Lemma~\ref{lemma:prophss}, $U^T A_{\text{off}} V$ is an $\epsilon$-$(\mathcal T^{(k)}_{p},k)$-HSS matrix and, in turn, an $\epsilon$-$(\mathcal T^{(k)}_{p-1},k)$-HSS matrix. We recall that $\mathcal T^{(k)}_{p-1}$ is the tree of depth $p-1$ obtained by eliminating the leaves of $\mathcal T^{(k)}_{p}$. Hence, the induction hypothesis implies the existence of $\delta_{p-1} A$ such that
$U^T A_{\text{off}} V + \delta_{p-1} A$ is a $(\mathcal T^{(k)}_{p-1},k)$-HSS matrix and
\begin{equation} \label{eq:projectorlevelpm1}
  \| \delta_{p-1} A \|_2 \le \sqrt{2^{p+1}-4} \epsilon.
\end{equation}
The matrix $UU^T A_{\text{off}} VV^T + U \delta_{p-1}A V^T$ is not only a 
$(\mathcal T_{p-1},k)$-HSS matrix but also a
$(\mathcal T_{p},k)$-HSS matrix because, by construction, its depth-$p$ HSS block rows and columns all have rank at most $k$. In summary, $A+\delta A$ is a $(\mathcal T_{p},k)$ matrix with 
\[
 \delta A := -(I-UU^T)A_{\text{off}} - UU^T A_{\text{off}} (I-VV^T) + U \delta_{p-1}A\, V^T.
\]
Exploiting the orthogonality of (co-)ranges and using~\eqref{eq:projectorsleft}--\eqref{eq:projectorlevelpm1}, the norm of this perturbation satisfies
\begin{eqnarray*}
 \|\delta A\|_2^2 & \le & \|(I-UU^T)A_{\text{off}}\|_2^2 + \|UU^T A_{\text{off}} (I-VV^T) + U \delta_{p-1}A\, V^T\|_2^2 \\
 & \le & \|(I-UU^T)A_{\text{off}}\|_2^2 + \|UU^T A_{\text{off}} (I-VV^T)\|_2^2 + \|U \delta_{p-1}A\, V^T\|_2^2 \\
 &\le& 2^p \epsilon^2 + 2^p \epsilon^2 + (2^{p+1}-4) \epsilon^2 = (2^{p+2}-4) \epsilon^2,
\end{eqnarray*}
which completes the proof.
\end{proof}

\subsection{Compressibility of solution $X$ in the HSS format}

We now consider the equation $AX+XB = C$ for $(\mathcal T_p,k)$-HSS coefficients $A,B,C$.
Algorithm~\ref{alg:dac} can be adapted to this situation by simply replacing operations in the HODLR format by operations in the HSS format. In our numerical tests, the HSS compression algorithm from~\cite{Xia2010} is used in line~\ref{step:hodlrcompress}. Moreover, sparse LU factorizations of the matrices $A$ and $B$, are obtained with the MATLAB function \texttt{lu}, in Algorithm~\ref{alg:kryl}. When $A$ and $B$ are non sparse HSS matrices, one can use the algorithms described in \cite{Xia2010} for precomputing either their ULV or Cholesky factorization.

In the following, we show that the solution $X$ can be well approximated by an HSS matrix.
\begin{lemma}\label{lem:hss-sylv}
Let $A,B,C\in\mathbb C^{n\times n}$ be $(\mathcal T_p,k)$-HSS matrices and suppose that $\mathcal W(A)\subseteq E$ and $\mathcal W(-B)\subseteq F$ for sets $E,F\subset \mathbb C$ satisfying $E\cap F=\emptyset$. Let $Y$ be an HSS block row or column of the solution $X$ of \eqref{eq:sylv}. Then
	\[
	\frac{\sigma_{3kh+1}(Y)}{\norm{Y}_2}\leq \big(1+\sqrt 2\big)\cdot Z_{h}(E,F).
	\]
	\end{lemma}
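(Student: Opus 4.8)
The plan is to reduce the statement about an HSS block row or column of the solution $X$ to a Sylvester equation to which Theorem~\ref{thm:beckermann} applies. The key observation is that an HSS block row $Y = X(I_i^\ell, I\setminus I_i^\ell)$ can be extracted from $X$ by right-multiplication with a coordinate restriction; more usefully, I would select the full block row $X(I_i^\ell, I)$ first and then restrict. So first I would multiply the Sylvester equation $AX + XB = C$ on the left by the restriction $R = (I_i^\ell, \cdot)$ that picks out the rows indexed by $I_i^\ell$, giving
\begin{equation*}
 R A X + R X B = R C.
\end{equation*}
The obstacle is that $RA$ does not simplify to a small coefficient acting on the block $RX$, because $A$ couples the rows in $I_i^\ell$ with the complementary rows. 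The right way to decouple is to split $A$ into its diagonal block $A(I_i^\ell, I_i^\ell)$ and the off-diagonal coupling, exactly as in the HODLR/HSS structure, and absorb the off-diagonal part into a low-rank right-hand side.

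Concretely, writing $A = A_{ii} \oplus A_{cc} + (\text{off-diagonal})$ with respect to the partition $I = I_i^\ell \cup (I \setminus I_i^\ell)$, the restricted row $Y_{\text{full}} := X(I_i^\ell, I)$ satisfies a Sylvester equation of the form
\begin{equation*}
 A(I_i^\ell, I_i^\ell)\, Y_{\text{full}} + Y_{\text{full}}\, B = \widetilde C,
\end{equation*}
where $\widetilde C$ collects $C(I_i^\ell, I)$ together with correction terms arising from the off-diagonal blocks of $A$ acting on the complementary rows of $X$. Since $A$ is a $(\mathcal T_p,k)$-HSS matrix, the HSS block row $A(I_i^\ell, I\setminus I_i^\ell)$ has rank at most $k$, and similarly the block column $C(\cdot, \cdot)$ contributions are controlled by the HSS rank of $C$; altogether the rank of $\widetilde C$ is bounded by $3k$ (one contribution each from the off-diagonal blocks of $A$, $B$, and from $C$ itself). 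This is the crux, and it mirrors the bound $\mathrm{rank}(D) \le \mathrm{rank}(A)+\mathrm{rank}(B)+\mathrm{rank}(C)$ used in~\eqref{eq:compactdelta} and the factor $6k$ in Lemma~\ref{thm:uniform-approximability}, except that here only one side is split so the count is $3k$ rather than $6k$.

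With the rank-$3k$ right-hand side in hand, I would invoke Theorem~\ref{thm:beckermann} applied to the equation $A(I_i^\ell,I_i^\ell)\, Y_{\text{full}} + Y_{\text{full}}\, B = \widetilde C$. The numerical range of the principal submatrix $A(I_i^\ell, I_i^\ell)$ is contained in $\mathcal W(A) \subseteq E$, and $\mathcal W(-B) \subseteq F$, so the hypotheses are met and the theorem gives
\begin{equation*}
 \frac{\sigma_{3kh+1}(Y_{\text{full}})}{\norm{Y_{\text{full}}}_2} \le (1+\sqrt{2})\, Z_h(E,F).
\end{equation*}
Finally, the HSS block row $Y$ is a submatrix of $Y_{\text{full}}$ obtained by deleting the columns in $I_i^\ell$; since deleting columns cannot increase singular values (interlacing), $\sigma_{3kh+1}(Y) \le \sigma_{3kh+1}(Y_{\text{full}})$, while $\norm{Y}_2 \le \norm{Y_{\text{full}}}_2$ makes the normalization direction the wrong way. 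To handle this cleanly I would instead argue directly on $Y$: the restriction to the complementary columns commutes with the low-rank-plus-approximation structure, so the bound on the singular value tail of $Y_{\text{full}}$ transfers to $Y$ because the rank-$3kh$ approximant of $Y_{\text{full}}$ restricts to a rank-$3kh$ approximant of $Y$ of no larger error, while $\norm{Y}_2$ replaces $\norm{Y_{\text{full}}}_2$ in the denominator. The block column case is symmetric, splitting $B$ instead of $A$. The main obstacle I anticipate is the bookkeeping of exactly which off-diagonal blocks contribute to $\widetilde C$ and confirming the rank count is $3k$ and not larger; the numerical-range containment and the final singular-value interlacing step are routine.
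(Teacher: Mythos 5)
Your overall strategy is the right one, but the specific reduction you set up has a genuine gap that the paper's proof avoids. You work with the \emph{full} block row $Y_{\text{full}} = X(I_i^\ell, I)$ and claim it satisfies $A(I_i^\ell,I_i^\ell)\,Y_{\text{full}} + Y_{\text{full}}\,B = \widetilde C$ with $\mathrm{rank}(\widetilde C)\le 3k$. Writing out the row extraction, $\widetilde C = C(I_i^\ell, I) - A(I_i^\ell, I\setminus I_i^\ell)\, X(I\setminus I_i^\ell, I)$. The second term indeed has rank at most $k$, but the first term contains the \emph{diagonal} block $C(I_i^\ell, I_i^\ell)$, whose rank is not controlled by the HSS property of $C$ at all (only the off-diagonal block $C(I_i^\ell, I\setminus I_i^\ell)$ has rank $\le k$). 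Since the paper explicitly excludes the case where $C$ itself has low rank, $\widetilde C$ can have nearly full rank, and Theorem~\ref{thm:beckermann} gives nothing useful. Note also that in your setup $B$ is never split, so your count of ``one contribution each from $A$, $B$, and $C$'' does not correspond to your own equation. Your closing step has a second gap: even if the bound on $Y_{\text{full}}$ held, it would read $\sigma_{3kh+1}(Y)\le (1+\sqrt2)Z_h(E,F)\,\|Y_{\text{full}}\|_2$ after column restriction, and since $\|Y\|_2\le \|Y_{\text{full}}\|_2$ this does not yield the claimed bound relative to $\|Y\|_2$; your assertion that ``$\|Y\|_2$ replaces $\|Y_{\text{full}}\|_2$ in the denominator'' is exactly the statement that needs proof, not a consequence of restricting the approximant.

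Both gaps disappear if you extract the off-diagonal block directly, which is what the paper does. For the HSS block column $X_{21} = X(I\setminus I_i^\ell, I_i^\ell)$, reading off the rows $I\setminus I_i^\ell$ and columns $I_i^\ell$ of $AX+XB=C$ gives
\begin{equation*}
 A_{22} X_{21} + X_{21} B_{11} = C_{21} - A_{21} X_{11} - X_{22} B_{21},
\end{equation*}
where now \emph{all three} terms on the right are off-diagonal HSS blocks of $C$, $A$, $B$ (each of rank $\le k$) multiplied, where applicable, by blocks of $X$; hence the right-hand side has rank at most $3k$ with no contribution from any dense diagonal block. Since $\mathcal W(A_{22})\subseteq \mathcal W(A)\subseteq E$ and $\mathcal W(-B_{11})\subseteq \mathcal W(-B)\subseteq F$, Theorem~\ref{thm:beckermann} applies to this equation whose solution \emph{is} $Y=X_{21}$, so the bound comes out relative to $\|Y\|_2$ directly and no interlacing or transfer argument is needed. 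The block row case is symmetric.
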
  
\begin{proof} We establish the result for an HSS block column 
$X(I\setminus I^\ell_i,I^\ell_i)$; the case of an HSS block row is 
treated analogously. Our proof follows the proof of Theorem~2.7 in
\cite{Massei2017}.

Let us define $A_{11} = A(I^\ell_i, I^\ell_i)$, 
$A_{21} = A(I\setminus I^\ell_i,I^\ell_i)$, 
$A_{12} = A(I^\ell_i, I\setminus I^\ell_i)$, 
$A_{22} = A(I\setminus I^\ell_i, I \setminus I^\ell_i)$, and
$B_{ij}, C_{ij}, X_{ij}$ analogously for $1 \leq i,j \leq 2$. Extracting
the indices $(I\setminus I^\ell_i,I^\ell_i)$ from the equation
$AX+XB = C$, we obtain the relation
\[
 A_{21} X_{11} + A_{22} X_{21} + X_{21} B_{11} + X_{22} B_{21} = C_{21}.
\]
This shows that $X_{21}$ satisfies a Sylvester equation with right-hand side of rank at most $3k$:
\[
 A_{22} X_{21} + X_{21} B_{11} = C_{21} - A_{21} X_{11} - X_{22} B_{21}.
\]
Since $\mathcal W(A_{22})\subseteq \mathcal W(A)$ and 
$\mathcal W(-B_{11})\subseteq \mathcal W(-B)$, 
and $X(I\setminus I^\ell_i,I^\ell_i) = X_{21}$, 
the claim follows from Theorem~\ref{thm:beckermann}. 
	\end{proof}

Combining Lemma~\ref{lem:hss-sylv} with Theorem~\ref{thm:hss-eps} yields the following result.
\begin{corollary} \label{cor:compresshss}
Under the assumptions of Lemma~\ref{lem:hss-sylv}, let $X$ be the solution of \eqref{eq:sylv}. 
Given $\epsilon>0$, let $h$ be the smallest integer that satisfies $(1+\sqrt 2)Z_h(E,F)\leq \frac{\epsilon }{\sqrt{2^{p+2}-4}}$. 
Then there exists a $(\mathcal T_p,3kh)$-HSS matrix $\widetilde X$  such that
$
\norm{X-\widetilde X}_2\leq \epsilon.
$
\end{corollary}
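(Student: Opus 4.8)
The plan is to obtain $\widetilde X$ as a direct composition of the two results immediately preceding the statement, exactly as the lead-in sentence suggests: Lemma~\ref{lem:hss-sylv} certifies that $X$ is \emph{approximately} HSS, and Theorem~\ref{thm:hss-eps} converts approximate HSS structure into an exact HSS matrix at a quantified cost in the $2$-norm. To fix notation I would abbreviate $\tilde\epsilon := (1+\sqrt 2)\,Z_h(E,F)$, so that the defining property of $h$ in the statement reads precisely $\sqrt{2^{p+2}-4}\cdot\tilde\epsilon \le \epsilon$. The target is then to exhibit a genuine $(\mathcal T_p,3kh)$-HSS matrix whose distance to $X$ is at most $\sqrt{2^{p+2}-4}\cdot\tilde\epsilon$, and to read off the conclusion from the inequality defining $h$.

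The first substantive step is to run Lemma~\ref{lem:hss-sylv} over \emph{every} HSS block row and column $Y$ of $X$, i.e.\ over all blocks listed in Definition~\ref{def:hss}(a). The lemma gives $\sigma_{3kh+1}(Y)\le\tilde\epsilon\,\|Y\|_2$, and by the singular-value characterization of $\epsilon$-approximations recalled in Section~\ref{sec:low-rank}, each such $Y$ admits a rank-$3kh$ approximation within the corresponding $2$-norm tolerance. Invoking Definition~\ref{def:hss}(d), I would package this into the single statement that $X$ is an $\tilde\epsilon$-$(\mathcal T_p,3kh)$-HSS matrix. I would then apply Theorem~\ref{thm:hss-eps} with rank parameter $3kh$ in place of $k$ and tolerance $\tilde\epsilon$: it supplies a perturbation $\delta X$ with $\|\delta X\|_2\le\sqrt{2^{p+2}-4}\cdot\tilde\epsilon$ such that $\widetilde X:=X+\delta X$ is exactly $(\mathcal T_p,3kh)$-HSS. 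Chaining the two inequalities gives $\|X-\widetilde X\|_2=\|\delta X\|_2\le\sqrt{2^{p+2}-4}\cdot\tilde\epsilon\le\epsilon$, which is the claim; the only arithmetic to verify is that the rank budget $3kh$ passes through Theorem~\ref{thm:hss-eps} unchanged and that its amplification factor $\sqrt{2^{p+2}-4}$ is exactly cancelled by the denominator appearing in the definition of $h$.

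The step that requires the most care, and the natural place for the argument to be delicate, is the certification in the previous paragraph that $X$ is an $\tilde\epsilon$-$(\mathcal T_p,3kh)$-HSS matrix: Lemma~\ref{lem:hss-sylv} controls the tail singular value $\sigma_{3kh+1}(Y)$ only \emph{relative} to $\|Y\|_2$, whereas Definition~\ref{def:hss}(d) and the hypotheses of Theorem~\ref{thm:hss-eps} are phrased with a single \emph{absolute} tolerance. Bridging the two is the crux. Since every HSS block row and column is a submatrix of $X$, one has $\|Y\|_2\le\|X\|_2$ uniformly over all $2^{p+1}-2$ blocks, which is what allows the heterogeneous per-block estimates to be collected into one common HSS tolerance feeding Theorem~\ref{thm:hss-eps}; I would spell out this normalization explicitly, mirroring the way the analogous HODLR statement in Corollary~\ref{cor:rank-increase} gathers its per-block corrections before amplifying. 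The main effort, then, is not analytic but bookkeeping: tracking the scale through the relative-to-absolute passage so that, combined with the defining inequality $\sqrt{2^{p+2}-4}\cdot\tilde\epsilon\le\epsilon$ for $h$, the final estimate comes out as the stated absolute bound $\|X-\widetilde X\|_2\le\epsilon$. I expect no further difficulty, as the spectral hypotheses $\mathcal W(A)\subseteq E$ and $\mathcal W(-B)\subseteq F$ are exactly what Lemma~\ref{lem:hss-sylv} already consumes and need not be revisited.
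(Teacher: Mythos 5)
Your proposal is essentially the paper's own proof: the paper justifies this corollary with the single sentence that it follows by combining Lemma~\ref{lem:hss-sylv} with Theorem~\ref{thm:hss-eps}, which is exactly your composition (apply the lemma blockwise to certify $X$ as approximately HSS with rank parameter $3kh$, then invoke the approximate-to-exact conversion of Theorem~\ref{thm:hss-eps}, whose amplification factor $\sqrt{2^{p+2}-4}$ is cancelled by the defining inequality for $h$). The relative-versus-absolute normalization you correctly flag as the crux is not resolved in the paper either---tracked literally, the lemma's relative tail bound yields $\norm{X-\widetilde X}_2\leq \epsilon\,\norm{X}_2$ rather than $\epsilon$, so your bridging via $\norm{Y}_2\leq\norm{X}_2$ reproduces the paper's implicit (relative) reading of the tolerance rather than introducing a gap of your own.
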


\subsubsection{Complexity of divide-and-conquer in the HSS format}
The complexity analysis of Algorithm~\ref{alg:dac} from Section~\ref{sec:complexity} extends to the HSS format as follows. We retain assumptions (i) and (iii) from Section~\ref{sec:complexity} and replace (ii) and (iv) by:
\begin{enumerate}[label=(\roman*)]
	\item[(ii')] $n=2^ps$ and the input matrices $A,B$ and $C$ are $(\mathcal T_p,k)$-HSS matrices of size $n\times n$;
	\item[(iv')] the compression in Line~\ref{step:hodlrcompress} of Algorithm~\ref{alg:dac} is performed and returns HSS rank $\mathcal O(k)$.
\end{enumerate}
The second part of Assumption (iv') is motivated by the fact that the (exact) matrix $X_0+\delta X$ is the solution of a Sylvester equation with the coefficients satisfying the conditions of Corollary~\ref{cor:compresshss}.  Applied recursively, Assumption (iv') implies that $X_{11}$ and $X_{22}$ have HSS rank $\mathcal O(k)$. Using the fact that matrix-vector multiplications with these matrices have complexity $\mathcal O(kn)$, Line~\ref{line:largerlowrank} requires $\mathcal O(k^2n)$ operations. The LU factorizations of $A$ and $B$ needed in Line~\ref{line:lrupdate} and the compression in Line~\ref{step:hodlrcompress}  have the same complexity~\cite{Xia2010}. Hence, by recursion, the overall complexity of Algorithm~\ref{alg:dac} in the HSS format is $\mathcal O(k^2n\log(n))$.

\subsubsection{Reducing the rank of updates in the Hermitian case}

The splitting~\eqref{eq:hodlrsplit}, the basis of our divide-and-conquer method, leads to  perturbations of rank $2k$ for general $(\mathcal T_p,k)$-HODLR and HSS matrices. 
For a Hermitian positive definite $(\mathcal T_p,k)$-HSS matrix $A$, let $A_{21} = U\Sigma V^*$ be the singular value decomposition of the subdiagonal block on level $1$. Instead of~\eqref{eq:hodlrsplit} we then consider the splitting
\begin{equation} \label{eq:defA}
   A = \begin{bmatrix}
    A_{11} & V \Sigma U^* \\
    U\Sigma V^*   & A_{22} \\
  \end{bmatrix} = A_0 + \delta A :=  
  \begin{bmatrix}
    A_{11} + V\Sigma V^* & 0 \\
    0 & A_{22} +  U\Sigma U^* \\
  \end{bmatrix}
  + \begin{bmatrix}
     V \\ -U
  \end{bmatrix}\Sigma \begin{bmatrix}
    -V \\  U
  \end{bmatrix}^*.
\end{equation}
The obvious advantage is that the perturbation now has rank $k$. However, in order to be a useful basis for divide-and-conquer method, $A_0$ needs to inherit the favorable properties of $A$. This is shown by the following lemma.
\begin{lemma} \label{lem:update-hss}
	Let $A$ be a Hermitian positive definite $(\mathcal T_p,k)$-HSS matrix, partitioned as in~\eqref{eq:defA}. Then $A_0$ is also a Hermitian positive definite $(\mathcal T_p,k)$-HSS matrix. 
\end{lemma}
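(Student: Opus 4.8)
My plan is to establish the three asserted properties of $A_0$ --- Hermitian, positive definite, and $(\mathcal T_p,k)$-HSS --- separately, after first recording the identity $A = A_0 + \delta A$. This identity is checked by expanding the rank-revealing factor in~\eqref{eq:defA} and using $A_{12}=V\Sigma U^*$, $A_{21}=U\Sigma V^*$ together with the fact that $\Sigma$ is real. Throughout I take $A_{21}=U\Sigma V^*$ to be the compact SVD, so that $\Sigma\succ 0$ is $r\times r$ with $r=\rank(A_{21})\le k$. The Hermitian property is then immediate: $A_{11}$ and $A_{22}$ are Hermitian as diagonal blocks of the Hermitian matrix $A$, while $V\Sigma V^*$ and $U\Sigma U^*$ are Hermitian because $\Sigma$ is real, so both diagonal blocks of $A_0$ are Hermitian.

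For positive definiteness, the key is that the perturbation $\delta A$ is negative semidefinite. Setting $Z:=\begin{bmatrix} V \\ -U\end{bmatrix}$, the second factor in~\eqref{eq:defA} equals $-Z$, so $\delta A = Z\Sigma(-Z)^* = -Z\Sigma Z^*$; as $\Sigma\succeq 0$ this gives $\delta A\preceq 0$. I would then conclude $A_0 = A-\delta A = A + Z\Sigma Z^* \succeq A \succ 0$. Equivalently, the quadratic form of $A_0$ minus that of $A$ equals $(V^*x-U^*y)^*\Sigma(V^*x-U^*y)\ge 0$, which is the same computation written out.

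The HSS-rank bound is the substantive part. Since $A_0$ is block diagonal with respect to the level-$1$ partition, its level-$1$ HSS block rows and columns vanish, and for any node $I^\ell_i\subseteq I^1_1$ with $\ell\ge 2$ one has $A_0(I^\ell_i, I^1_2)=0$ (and symmetrically for nodes inside $I^1_2$), so the full HSS block row of $A_0$ reduces to that of the corresponding diagonal block. Hence I would reduce the claim to showing that the two diagonal blocks $\widetilde A_{11}:=A_{11}+V\Sigma V^*$ and $\widetilde A_{22}:=A_{22}+U\Sigma U^*$ are $(\mathcal T_{p-1},k)$-HSS matrices, treating $\widetilde A_{11}$ in detail and obtaining $\widetilde A_{22}$ by interchanging the roles of $U$ and $V$.

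The obstacle is that the naive estimate ``HSS matrix plus rank-$k$ matrix'' would only give HSS rank $2k$; the point is that $V\Sigma V^*$ introduces no direction transversal to the HSS structure of $A$. To see this, I would fix a proper descendant $I^\ell_i\subsetneq I^1_1$ and let $\mathcal S_i:=\range\big(A(I^\ell_i, I\setminus I^\ell_i)\big)$, which has dimension at most $k$ because $A$ is $(\mathcal T_p,k)$-HSS. Splitting $I\setminus I^\ell_i=(I^1_1\setminus I^\ell_i)\cup I^1_2$, the block $A(I^\ell_i,I^1_2)$ equals $V(I^\ell_i,:)\,\Sigma U^*$, whose column space is exactly $\range\big(V(I^\ell_i,:)\big)$ since $\Sigma U^*$ has full row rank; therefore both $\range\big(A(I^\ell_i, I^1_1\setminus I^\ell_i)\big)$ and $\range\big(V(I^\ell_i,:)\big)$ lie in $\mathcal S_i$. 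The corresponding block row of $\widetilde A_{11}$ is $A(I^\ell_i, I^1_1\setminus I^\ell_i)+V(I^\ell_i,:)\,\Sigma\,V(I^1_1\setminus I^\ell_i,:)^*$, and the columns of the second summand lie in $\range\big(V(I^\ell_i,:)\big)\subseteq\mathcal S_i$; hence the entire block row has its columns in $\mathcal S_i$ and rank at most $k$. The matching HSS block column has the same rank because $\widetilde A_{11}$ is Hermitian, so $\widetilde A_{11}$ is $(\mathcal T_{p-1},k)$-HSS, which finishes the proof.
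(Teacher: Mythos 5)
Your proof is correct and takes essentially the same route as the paper: the core step in both is that the update $V\Sigma V^*$ (resp.\ $U\Sigma U^*$) cannot raise the HSS block row/column ranks of the diagonal blocks because its range lies inside the range of the corresponding HSS block of $A$ --- the paper encodes this via the factorization $A_{11}+V\Sigma V^* = \begin{bmatrix} A_{11} & V\Sigma U^* \end{bmatrix}\begin{bmatrix} I \\ UV^* \end{bmatrix}$, while you phrase it as the inclusion $\range\big(V(I^\ell_i,:)\big)=\range\big(A(I^\ell_i,I^1_2)\big)\subseteq \mathcal S_i$, which is the same fact. The only cosmetic differences are that you dispatch the HSS block columns by Hermitian symmetry of $A_{11}+V\Sigma V^*$ (the paper uses the second, transposed factorization instead) and that you write out the positive definiteness argument $\delta A = -Z\Sigma Z^*\preceq 0$, which the paper declares straightforward.
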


\begin{proof}
Note that \[ A_{11} + V\Sigma V^* = \begin{bmatrix} A_{11} & V \Sigma U^* \end{bmatrix} \begin{bmatrix} I \\ UV^* \end{bmatrix} = \begin{bmatrix} I & V U^* \end{bmatrix} \begin{bmatrix}
    A_{11}  \\
    U\Sigma V^*    \\
  \end{bmatrix}. \] The first relation implies that the rank of an HSS block row of $A_{11} + V\Sigma V^*$ is bounded by the rank of the corresponding HSS block row of $\begin{bmatrix} A_{11} & V \Sigma U^* \end{bmatrix}$, which is bounded by $k$. The second relation implies that the rank of an HSS block column of $A_{11} + V\Sigma V^*$ is bounded by the rank of the corresponding HSS block column of $\begin{bmatrix}
    A_{11}  \\
    U\Sigma V^*    \\
  \end{bmatrix}$, which is also bounded by $k$. An analogous argument applies to the HSS block rows and columns of $A_{22} +  U\Sigma U^*$. Thus, $A_0$ is a $(\mathcal T_p,k)$-HSS matrix.
  It is straightforward to verify that $A_0$ is Hermitian positive definite.
\end{proof} 
The right hand side $C = C_0 +\delta C$ in \eqref{eq:sylv} is treated similarly, for lowering the rank of the right hand side of \eqref{eq:delta-sylv}. Since we do not need to preserve any positive definiteness in $C_0$, we are allowed to choose $\delta C= \begin{bmatrix}
\theta V \\ -U
\end{bmatrix}\Sigma \begin{bmatrix}
-V \\  \theta^{-1}U
\end{bmatrix}^*$, for $\theta\neq 0$.

	\begin{remark}
		In the special case when $A$ is a Hermitian banded matrix, with bandwidth smaller than $s$, the updates $VV^*$ and $UU^*$ only affect the smallest diagonal blocks in the south east corner of $A_{11}$ and in the north-west corner of $A_{22}$, respectively. In particular, the sparsity pattern of the off-diagonal blocks is maintained.
		\end{remark}
\section{Numerical results}\label{sec:num-res}


In this section, we illustrate the performance of our divide-and-conquer method from Section~\ref{sec:dac} for a number of different examples. In particular, we consider linear matrix equations $AX+XB = C$ for which $A, B, C$ are efficiently representable as HSS or HODLR matrices. We exclude cases where $C$ has low rank (or low numerical rank), since these can be treated more efficiently directly by ADI or Krylov subspace methods.

We compare our method  with other techniques developed for linear matrix equations with rank-structured  coefficients. In particular, this includes the matrix sign function iteration for HODLR matrices
proposed in \cite{Grasedyck2003a}, and recently tested in \cite{Massei2017}. When the coefficients $A$ and $B$ are symmetric positive definite, well conditioned and banded, we also compare with the approach proposed in \cite{Palitta2017}, a matrix version of the conjugate gradient that exploits the approximate sparsity in the solution. 

A number of approaches are based on applying numerical quadrature to $X = \int_{0}^\infty e^{-tA} C e^{-tB} dt$; for example in~\cite{Haber2016} in the context of sparsity and in~\cite{Massei2017} in the context of HODLR matrices. As demonstrated in~\cite{Massei2017} such approaches are less competitive
compared to the sign iteration and they are therefore not included in our comparison.

\subsection{Details of the implementation}\label{sec:details}

All experiments have been performed on a Laptop with the dual-core Intel Core i7-7500U 2.70 GHz CPU, 256KB of level 2 cache, and 16 GB of RAM. The
algorithms are implemented in MATLAB and tested under MATLAB2016a, 
with MKL BLAS version 11.2.3 utilizing both cores.

The methods described in Section~\ref{sec:alg-and-ric} and Section~\ref{sec:dac} require the choice of several parameters:
\begin{itemize}
	\item $\tau_{\mathsf{NW}} =$ tolerance for stopping the Newton method, see Line~\ref{line:stopnewton} of Algorithm~\ref{alg:riccati};
	\item $\tau_{\mathsf{EK}} = $ tolerance for stopping the extended Krylov subspace method, see~\eqref{eq:stop};
	\item $s = $ size of the diagonal blocks in the HODLR/HSS block partitioning;
	\item $\tau_{\sigma} = $ tolerance for low-rank truncations when compressing the right-hand side (see Section~\ref{sec:compress}), the output of Algorithm~\ref{alg:kryl}, as well as HODLR and HSS matrices in Line~\ref{step:hodlrcompress} of Algorithm~\ref{alg:dac}.
\end{itemize}
 Concerning the compression in the HODLR and HSS formats, we specify that in each off-diagonal block we discard the singular values that are relatively small with respect to the norm of the whole matrix, that can be cheaply estimated with a few steps of the power method. 

The values of the parameters used in the various experiments are reported in Table~\ref{tab:param}.
We have found that the performance of the proposed algorithms is not very sensitive
to the choices of the tolerances reported in Table~\ref{tab:param}: smaller tolerances lead to
more accurate results, as one would expect. It is, however, advisable to choose $\tau_{\mathsf{EK}}$ and $\tau_\sigma$ on a similar level, in order to avoid wasting
computational resources. The tolerance $\tau_{\mathsf{NW}}$ can be chosen larger because the quadratic convergence of the Newton method implies that the actual error $\norm{X_{k+1} - X^*}$ is proportional to $\tau_{\mathsf{NW}}^2$.


To assess the accuracy of an approximate solution $\hat X$ of a Sylvester equation, we report the  residual 
$\mathrm{Res}(\hat X) = \norm{A\hat X+\hat XB-C}_2 / ((\norm{A}_2+\norm{B}_2)\norm{\hat X}_2)$
which is linked to the relative backward error on the associated linear system~\cite{Higham2002}. For CAREs we consider the quantity 
$\mathrm{Res}(\hat X) = \norm{A\hat X+\hat XA^*-\hat XB\hat X-C}_2 / \norm{A X_0+X_0A^*-X_0BX_0-C}_2$ instead.

\begin{table}\label{tab:param}
\centering
\label{my-label}
\begin{tabular}{l|llll}
	Test&$\tau_{\mathsf{NW}}$  &$\tau_{\mathsf{EK}}$   & $s$  &$\tau_{\sigma}$  \\ \hline
	Example~\ref{exa:1}&$10^{-8}$  &$10^{-8}$   & - &-  \\ 
	Example~\ref{exa:2}&$10^{-8}$  &$10^{-12}$  &-  &-  \\ 
	Section~\ref{sec:laplacian}&-  &$10^{-12}$   &$256$  &$10^{-12}$  \\ 
	Section~\ref{sec:conv-diff}&-  &$10^{-12}$   &$256$  &$10^{-12}$  \\ 
	Section~\ref{sec:heat}&-  &$10^{-6}$   &$256$  &$10^{-6}$  \\ 
	Section~\ref{sec:ric-band}&$10^{-8}$  &$10^{-12}$   &$256$  &$10^{-12}$  \\ 
	Section~\ref{sec:shuffle}&$10^{-8}$   &$10^{-12}$    &$256$  &$10^{-12}$\\
	\hline
\end{tabular}
\caption{Choices of parameters used in the experiments.}
\end{table}

\subsection{Discretized 2D Laplace equation} \label{sec:laplacian}
We consider the two-dimensional Laplace equation 
	\begin{equation} \label{eq:laplace2d}
	   \begin{cases}
	     - \Delta u = f(x,y) & (x,y) \in \Omega \\
	     u(x,y) = 0 & (x,y) \in \partial \Omega \\
	   \end{cases}, \qquad 
	    \Delta u = \frac{\partial^2 u}{\partial x^2} + 
	   \frac{\partial^2 u}{\partial y^2}, 
	\end{equation}
	for the square domain $\Omega = [ 0, 1 ]^2$ and $f(x,y) = \log(1 + |x - y|)$.  It is well known that the central finite 
	difference discretization~\eqref{eq:laplace2d} on a regular grid leads to a Lyapunov equation $AX + XA = C$ with coefficients
	$A = (n+1)^2\cdot \trid(-1, 2, -1)$ and $C$ containing  samples of $f(x,y)$ on the grid.  The latter matrix does not have low (numerical) rank, but it can be well approximated in the HODLR and HSS formats relying on the Chebyshev expansion of $f$ in the off-diagonal sub domains of $\Omega$, see the discussion in \cite[Example 6.1]{Massei2017}.
	
Table~\ref{tab:laplacian} and Figure~\ref{fig:laplacian} compare the performance of the matrix sign function iteration in the HODLR format with the divide-and-conquer method in both, the HODLR and HSS formats. %
	\begin{table}[t]
		\centering
		
		\small
		\pgfplotstabletypeset[%
		every last row/.style={after row=\bottomrule},
		sci zerofill,
		columns={0,1,2,4,5,7,8},
		columns/0/.style={column name=$n$},
		columns/1/.style={column name=$\mathrm{T}_{\text{Sign}}$},
		columns/2/.style={column name=$\mathrm{Res}_{\text{Sign}}$},    
		columns/4/.style={column name=$\mathrm{T}_{\text{D\&C HODLR}}$},
		columns/5/.style={column name=$\mathrm{Res}_{\text{D\&C HODLR}}$},
		columns/7/.style={column name=$\mathrm{T}_{\text{D\&C HSS}}$},
		columns/8/.style={column name=$\mathrm{Res}_{\text{D\&C HSS}}$}
		]{hss_experiment1.dat}
		\caption{Execution times (in seconds) and relative residuals for the matrix sign function iteration and the divide-and-conquer method applied to the discretized 2D Laplace equation from Section~\ref{sec:laplacian}.}
		\label{tab:laplacian}
	\end{table}
	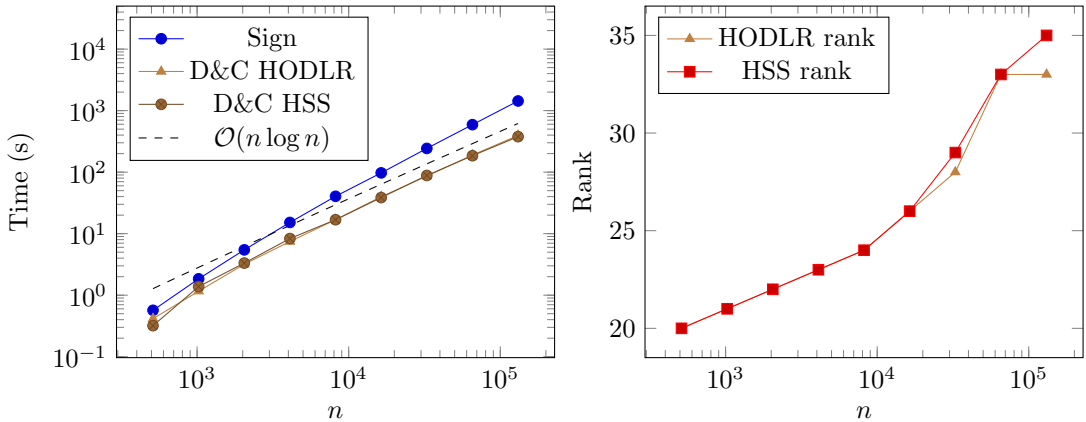
\begin{figure}
	\centering
	
		\begin{tikzpicture}
		\begin{loglogaxis}[width=.49\linewidth, height=.32\textheight,
		legend pos = north west,
		xlabel = $n$, ylabel = Time (s), ymax= 5e4]
		\addplot table[x index=0, y index=1]  {hss_experiment1.dat};
		\addplot[brown,mark=triangle*] table[x index=0, y index=4]  {hss_experiment1.dat};
		\addplot table[x index=0, y index=7] {hss_experiment1.dat};
		\addplot[domain=512:131000,dashed] {4e-4 * x * ln(x) };
		\legend{Sign, D\&C HODLR,D\&C HSS, $\mathcal O(n \log n)$};
		\end{loglogaxis}
		\end{tikzpicture}	
		\begin{tikzpicture}
		\begin{semilogxaxis}[width=.49\linewidth, height=.32\textheight,
		legend pos = north west,
		xlabel = $n$, ylabel = Rank]
		\addplot[brown,mark=triangle*] table[x index=0, y index=6]  {hss_experiment1.dat};
		\addplot table[x index=0, y index=9] {hss_experiment1.dat};
		\legend{HODLR rank,HSS rank};
		\end{semilogxaxis}
		\end{tikzpicture}
		
		\caption{On the left, timings of the algorithms applied to the Laplacian equation with respect to the grid size.
			The dashed lines report the expected theoretical complexity of the divide-and-conquer strategies. On the right, HODLR and HSS rank of the solutions returned by the divide-and-conquer methods.}
		\label{fig:laplacian}
	\end{figure}

The divide-and-conquer method is based on extended Krylov subspaces with principal submatrices of $A$. As these matrices inherit the tridiagonal structure of $A$, they can be easily applied and inverted. In contrast, the matrix sign iteration method does not preserve bandedness and needs to operate with general HODLR matrices. This is significantly less efficient; in turn, our divide-and-conquer method is always faster and scales more favorably as $n$ increases. Moving from the HODLR to the HSS format results in further (modest) speedup. The HODLR and HSS ranks remain reasonably small in all approaches.  
One major advantage of the HSS format is its reduced memory requirements; for example for $n = 1.31\cdot 10^5$, $433$ MByte and $267$ MByte are required to store the approximate solution in the HODLR and HSS formats, respectively. 
	
\subsection{Convection diffusion}\label{sec:conv-diff}
We repeat the experiment from Section~\ref{sec:laplacian} for the convection-diffusion equation
\[
\begin{cases}
- \Delta u + v\nabla u = f(x,y) & (x,y) \in \Omega:=[0,1] \\
u(x,y) = 0 & (x,y) \in \partial \Omega \\
\end{cases},
\]
where $v=[10,10]$ and, once again, $f(x,y) = \log(1 + |x - y|)$. A standard finite difference discretization now leads to a Lyapunov equation $AX+XA^T=C$ with the nonsymmetric matrix 
\[
A=(n+1)^2 \begin{bmatrix}
2&-1\\
-1&2&-1\\
&\ddots&\ddots&\ddots\\
&&-1&2&-1\\
&&&-1&2
\end{bmatrix}+\frac{5}{2}(n+1)
\begin{bmatrix}
3&-5&1\\
1&3&-5&\ddots\\
&\ddots&\ddots&\ddots&1\\
&&1&3&-5\\
&&&1&3
\end{bmatrix}
\]  
 and $C$ as in Section~\ref{sec:laplacian}. 
Table~\ref{tab:conv-diff} displays the timings and the relative residuals obtained for this example, reconfirming our observations for the symmetric example from Section~\ref{sec:laplacian}. Also, we have observed the HODLR and HSS ranks to behave in a similar manner. 

\begin{table}[t]
	\centering
	
	\small
	\pgfplotstabletypeset[%
	every last row/.style={after row=\bottomrule},
	sci zerofill,
	columns={0,1,2,4,5,7,8},
	columns/0/.style={column name=$n$},
	columns/1/.style={column name=$\mathrm{T}_{\text{Sign}}$},
	columns/2/.style={column name=$\mathrm{Res}_{\text{Sign}}$},    
	columns/4/.style={column name=$\mathrm{T}_{\text{D\&C HODLR}}$},
	columns/5/.style={column name=$\mathrm{Res}_{\text{D\&C HODLR}}$},
	columns/7/.style={column name=$\mathrm{T}_{\text{D\&C HSS}}$},
	columns/8/.style={column name=$\mathrm{Res}_{\text{D\&C HSS}}$},
	columns/9/.style={column name=$ HSS_{rank}$}
	]{hss_experiment3.dat}
	\caption{Execution times (in seconds) and relative residuals for the matrix sign function iteration and the divide-and-conquer method applied to the discretized convection-diffusion equation from Section~\ref{sec:conv-diff}.}
	\label{tab:conv-diff}
\end{table}
    \subsection{Heat equation}\label{sec:heat}
    
    A model describing
    the temperature change of a thermally actuated deformable
    mirror used in extreme ultraviolet litography~\cite[Section 5.1]{Haber2016} leads to a symmetric Lyapunov equation $AX + XA = C$ with coefficients 
    \begin{align*}
      A &= I_{q} \otimes \trid_6(b,a,b) + 
       \trid_q(b,0,b) \otimes I_6\\
      C&= I_{q} \otimes (-c \cdot E_6 + (c-1) \cdot I_6)
        + \trid_q(d,0,d) \otimes E_6, 
    \end{align*}
    where $a=-1.36$, $b=0.34$, $c=0.2$, $d=0.1$ 
    and $E_6$ is the $6 \times 6$ matrix of all ones. Note that $A$ and $C$ are banded with bandwidth $6$ and $11$, respectively. As analyzed in~\cite[Ex. 2.6]{Palitta2017}, the condition number of $A$ is bounded by $40$ and hence the sparse conjugate gradient (CG) method proposed there scales linearly with $n:=6q$. We executed the sparse CG setting $X_0:=0$ and using  $\norm{AX_k+X_kA-C}_F/\norm{C}_F\leq 10^{-6}$, for stopping the iterations, as suggested in \cite{Palitta2017}. Although it is faster and its advantageous scaling is clearly visible, approximate sparsity is, compared to the HODLR and HSS format, significantly less effective at compressing the solution $X$; see Figure~\ref{fig:heat}. 
    The observed HODLR and HSS ranks are equal to $10$ and $20$ respectively, independently of $n$. 
    	\begin{table}[t]
    	\centering
    	
    	\small
    	\pgfplotstabletypeset[%
    	every last row/.style={after row=\bottomrule},
    	sci zerofill,
    	columns={0,1,2,4,5,8,9},
    	columns/0/.style={column name=$n$},
    	columns/1/.style={column name=$\mathrm{T}_{\text{CG}}$},
    	columns/2/.style={column name=$\mathrm{Res}_{\text{CG}}$},    
    	columns/4/.style={column name=$\mathrm{T}_{\text{D\&C HODLR}}$},
    	columns/5/.style={column name=$\mathrm{Res}_{\text{D\&C HODLR}}$},
    	columns/8/.style={column name=$\mathrm{T}_{\text{D\&C HSS}}$},
    	columns/9/.style={column name=$\mathrm{Res}_{\text{D\&C HSS}}$}
    	]{hss_experiment2.dat}
    	\caption{Execution times (in seconds) and relative residuals for the sparse CG method and the divide-and-conquer method applied to the head equation from Section~\ref{sec:heat}.  
    		}
    	\label{tab:heat}
    \end{table}
    
    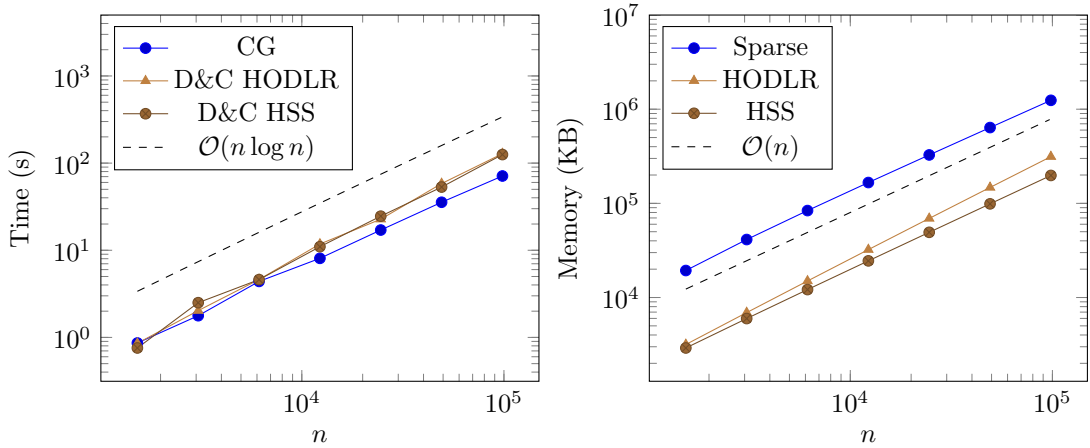
\begin{figure}
    	\centering
    	
    	\begin{tikzpicture}
    	\begin{loglogaxis}[width=.49\linewidth, height=.33\textheight,
    	legend pos = north west,
    	xlabel = $n$, ylabel = Time (s), ymax = 5e3]
    	\addplot table[x index=0, y index=1]  {hss_experiment2.dat};
    	\addplot[brown,mark=triangle*] table[x index=0, y index=4]  {hss_experiment2.dat};
    	\addplot table[x index=0, y index=8] {hss_experiment2.dat};
    	\addplot[domain=1536:97000,dashed] {3e-4 * x * ln(x) };
    	\legend{CG, D\&C HODLR,D\&C HSS, $\mathcal O(n \log n)$};
    	\end{loglogaxis}		
    	\end{tikzpicture}
    	\begin{tikzpicture}
    	\begin{loglogaxis}[width=.49\linewidth, height=.33\textheight,
    	legend pos = north west,
    	xlabel = $n$, ylabel = Memory (KB), ymax = 10^7]
    	\addplot table[x index=0, y index=1]  {memory.dat};
    	\addplot[brown,mark=triangle*] table[x index=0, y index=2]  {memory.dat};
    	\addplot table[x index=0, y index=3] {memory.dat};
    	\addplot[domain=1536:97000,dashed] {8* x  };
    	\legend{Sparse, HODLR,HSS, $\mathcal O(n)$};
    	\end{loglogaxis}
    	\end{tikzpicture}
    	\caption{Time (left) and memory (right) consumptions for solving  the Heat equation with
    		different grid sizes.
    		The dashed lines report the expected asymptotic complexity for the divide-and-conquer with HSS matrices.}
    	\label{fig:heat}
    \end{figure}

    \subsection{A large-scale Riccati equation with banded and low-rank coefficients}\label{sec:ric-band}
    
In this example, we demonstrate how a combination of 
Algorithms~\ref{alg:riccati} and~\ref{alg:dac} can be used to address certain large scale Riccati equations. Consider the CARE $AX + XA^* - XBX - C = 0$ with the coefficients
\begin{align*}
A=\trid_n(1,-2,1)\in\mathbb R^{n\times n},\quad  B=B_UB_U^T,\quad B_U=\begin{bmatrix} e_1 & e_n
\end{bmatrix}\in\mathbb R^{n\times 2},\quad C=-I_n.
\end{align*}
As the matrix $A$ is negative definite, we can choose $X_0=0$ as the stabilizing initial guess in the Newton method. In turn, the Lyapunov equation in the first step (see  Line~\ref{line:firstlyapnewton} of Algorithm~\ref{alg:riccati}) takes the form $AX+XA=C$. Exploiting the structure of the coefficients, we address this equation with Algorithm~\ref{alg:dac} in the HSS format. For all subsequent iterations, we use the low-rank update procedure described in Section~\ref{sec:riccati}, recompressing the intermediate solution $X_k$ in the HSS format.

In contrast to the observations made in Section~\ref{sec:ric-test}, the results displayed 
in Table~\ref{tab:exp3} now reveal that the first step does not dominate the cost of the overall algorithm. Note that $T_{\text{avg}}$, the average time per Newton step, grows more than linearly as $n$ increases, due to the fact that the condition number of $A$ increases and, in turn, the extended Krylov subspace method converges more slowly.
As $n$ increases, the norm of the final solution grows accordingly to the final residue.
The HSS rank of the approximate solution $X$ grows  slowly, apparently only logarithmically with $n$. 
		
\begin{table}[t]
	\centering 		
	\small
	\pgfplotstabletypeset[%
	every last row/.style={after row=\bottomrule},		
	sci zerofill,
	columns={0,8,1,3,4,5,6,7},
	columns/0/.style={column name=$n$},
	columns/8/.style={column name=$\norm{\hat X}_2$, sci},
	columns/1/.style={column name=$\mathrm{T}_{\text{tot}}$},
	columns/3/.style={column name=$\frac{T_{\texttt{step 1}}}{T_{\text{tot}}}$,fixed},
	columns/4/.style={column name=$\mathrm{T}_{\text{avg}}$,fixed},    
	columns/5/.style={column name=Res},
	columns/6/.style={column name=it},
	columns/7/.style={column name=HSS rank}
	]{e1.dat}
	\caption{Performance of Algorithm~\ref{alg:riccati}, combined with Algorithm~\ref{alg:dac} in the HSS format for the first step, for the CARE from Section~\ref{sec:ric-band}.}
	\label{tab:exp3}
\end{table}

\subsection{A large-scale Riccati equation from a second-order problem}\label{sec:shuffle}

Let us consider a linear second-order control system
\[
  M \ddot{z} + L \dot z = Kz = Du,
\]
where $M$ is diagonal and $K$, $L$ are banded (or, more generally, HSS).
Applying linear-optimal control leads, after a suitable linearization, to a CARE~\eqref{eq:riccati} with the matrix $A$ taking the form\footnote{Note that, in contrast to~\cite{Abels1999}, we use the second companion linearization in order to be consistent with our choice of transposes in~\eqref{eq:riccati}.}
\begin{equation}  \label{eq:matrixA}
	A = \begin{bmatrix}
	  0 & -M^{-1} K \\
	  I_q & -M^{-1} L 
	\end{bmatrix}.
\end{equation}
In fact, the matrix $A$ from Example~\ref{exa:2} is of this type, with $K$ tridiagonal and $M,L$ (scaled) identity matrices. It turns out that $A$ does \emph{not} have low HODLR or HSS rank. In the following, we explain a simple trick to turn $A$ into an HSS matrix, which then allows us to apply and the techniques from Section~\ref{sec:ric-band} to Example~\ref{exa:2}.

We first observe that the matrix $A$ from~\eqref{eq:matrixA} can be decomposed as
\[
  A = \begin{bmatrix} 0 & 0 \\ 1 & 0 \end{bmatrix} \otimes I_q - 
    \begin{bmatrix} 0 & 1 \\ 0 & 0 \end{bmatrix} \otimes M^{-1} K - 
    \begin{bmatrix} 0 & 0 \\ 0 & 1 \end{bmatrix} \otimes M^{-1} L. 
\]
Let $\Pi$ denote the perfect shuffle permutation~\cite{VanLoan2000}, which swaps the order
in the Kronecker product of matrices of sizes $2$ and $q$: $\Pi (X \otimes Y) \Pi^* = Y \otimes X$, for any
$X \in \mathbb{C}^{2 \times 2}$, $Y \in \mathbb{C}^{q \times q}$.  
Hence,
\begin{equation} \label{eq:tildeA}
  \widetilde A := \Pi A \Pi^* = I_q \otimes \begin{bmatrix} 0 & 0 \\ 1 & 0 \end{bmatrix}
    -M^{-1} K \otimes \begin{bmatrix} 0 & 1 \\ 0 & 0 \end{bmatrix}
    -M^{-1} L \otimes \begin{bmatrix} 0 & 0 \\ 0 & 1 \end{bmatrix}. 
\end{equation}
The following result allows us to control the HSS ranks for each of the terms.
\begin{lemma}\label{lemma:hsskron}
	Let $A\in\mathbb C^{q\times q}$ 
	be an $(\mathcal T_p,k_A)$-HSS matrix, and let $B \in \mathbb{C}^{m \times m}$ have rank $k_B$. Then $A \otimes B$ is a $(\mathcal T_p^{(mq)}, k_Ak_B)$-HSS matrix, where $\mathcal T_p^{(mq)}$ is the cluster tree defined by the integer partition 
	\[ mq = mq_1 + mq_2 + \cdots + mq_{2^p}. \] 
\end{lemma}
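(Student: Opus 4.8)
The plan is to verify Definition~\ref{def:hss}(b) directly: I would show that every HSS block row and every HSS block column of $A\otimes B$ has rank at most $k_Ak_B$, by recognizing each such block as a Kronecker product of a corresponding HSS block of $A$ with the full matrix $B$. No spectral or separability assumptions enter; the argument is purely combinatorial together with the multiplicativity of rank under Kronecker products.

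First I would pin down the nodes of the enlarged cluster tree. Using the standard convention that identifies the row index $(a-1)m+\kappa$ of $A\otimes B$ with the pair $(a,\kappa)$, where $a\in\{1,\ldots,q\}$ and $\kappa\in\{1,\ldots,m\}$, the partition $mq = mq_1 + \cdots + mq_{2^p}$ makes each leaf of $\mathcal T_p^{(mq)}$ equal to $I_i^p \times \{1,\ldots,m\}$. Since every interior node is the union of its children, a straightforward induction up the tree gives $\tilde I_i^\ell = I_i^\ell \times \{1,\ldots,m\}$ for each node $I_i^\ell$ of $\mathcal T_p$ and the matching node $\tilde I_i^\ell$ of $\mathcal T_p^{(mq)}$. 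In particular $\tilde I \setminus \tilde I_i^\ell = (I\setminus I_i^\ell)\times\{1,\ldots,m\}$, so the relevant blocks are \emph{block aligned} with respect to the Kronecker structure.

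Next I would exploit this alignment. For any index sets $S,T\subseteq\{1,\ldots,q\}$ the identity $(A\otimes B)(S\times\{1,\ldots,m\},\, T\times\{1,\ldots,m\}) = A(S,T)\otimes B$ follows immediately from $(A\otimes B)_{(a-1)m+\kappa,\,(b-1)m+\lambda} = A_{ab}B_{\kappa\lambda}$. Applying it with $S=I_i^\ell$ and $T=I\setminus I_i^\ell$ shows that the HSS block row of $A\otimes B$ at the node $\tilde I_i^\ell$ equals $A(I_i^\ell,I\setminus I_i^\ell)\otimes B$, and the analogous identity holds for the HSS block columns. Combining $\operatorname{rank}(M\otimes N)=\operatorname{rank}(M)\operatorname{rank}(N)$ with the hypotheses $\operatorname{rank}\!\big(A(I_i^\ell,I\setminus I_i^\ell)\big)\le k_A$ and $\operatorname{rank}(B)=k_B$ yields that every HSS block row and column of $A\otimes B$ has rank at most $k_Ak_B$, which is exactly the claim. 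The calculations are entirely routine; the one point requiring care is the index bookkeeping, namely confirming that the nodes of $\mathcal T_p^{(mq)}$ are precisely the block-aligned sets $I_i^\ell\times\{1,\ldots,m\}$, so that the extracted blocks are genuine Kronecker products rather than arbitrary submatrices. Once this alignment is established, the rank bound is immediate.
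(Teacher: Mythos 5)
Your proof is correct and follows essentially the same route as the paper's: the paper likewise observes that each HSS block row (or column) of $A\otimes B$ is the Kronecker product of the corresponding HSS block of $A$ with $B$, and concludes the rank bound $k_Ak_B$ from there. Your write-up simply makes explicit the index bookkeeping (that the nodes of $\mathcal T_p^{(mq)}$ are the block-aligned sets $I_i^\ell\times\{1,\ldots,m\}$) that the paper's two-sentence proof leaves implicit.
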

\begin{proof} 
	The results follows immediately considering that an HSS
	block row $\hat X$ in $A$ corresponds to the HSS block row
	$\hat X \otimes B$ in $A \otimes B$, with respect to $\mathcal T_p^{(m)}$. If the former has rank bounded
	by $k_A$, then the latter has rank bounded by $k_Ak_B$. Analogously for HSS block columns.
\end{proof}

Lemma~\ref{lemma:hsskron} implies that the matrix $\widetilde A$ from~\eqref{eq:tildeA} is a 
$(\mathcal T_p^{(2)},k_1 + k_2)$-HSS if $M^{-1} K$ and $M^{-1} L$ are 
$(\mathcal T_p,k_1)$- and $(\mathcal T_p,k_2)$-HSS matrices, respectively.
For Example~\ref{exa:2} these assumptions are satisfied
with $k_1 = 0, k_2 = 1$. In turn this allows us to apply the techniques from Section~\ref{sec:ric-band} to the shuffled Riccati equation from Example~\ref{exa:2}, using the shuffled starting guess $\Pi X_0\Pi^*$. Table~\ref{tab:shuffle} displays the obtained results. We highlight that the non-symmetric Lyapunov equation that is solved in the first step of the Newton method does not satisfy the hypotheses of Lemma~\ref{thm:uniform-approximability}. In fact, the field of values of the matrix $A-X_0B$ is not contained in the open left half plane. Still, Algorithm~\ref{alg:dac} is observed to perform very well.

\begin{table}[t]
	\centering 		
	\small
	\pgfplotstabletypeset[%
	every last row/.style={after row=\bottomrule},			
	sci zerofill,
	columns={0,8,1,3,4,5,6,7},
	columns/0/.style={column name=$n$},
	columns/8/.style={column name=$\norm{\hat X}_2$, sci},
	columns/1/.style={column name=$\mathrm{T}_{\text{tot}}$},
	columns/3/.style={column name=$\frac{T_{\texttt{step 1}}}{\mathrm{T}_{\text{tot}}}$,fixed},
	columns/4/.style={column name=$\mathrm{T}_{\text{avg}}$,fixed},
	columns/5/.style={column name=Res},
	columns/6/.style={column name=it},
	columns/7/.style={column name=HSS rank}
	]{e4.dat}
	\caption{Performance of Algorithm~\ref{alg:riccati}, combined with Algorithm~\ref{alg:dac} in the HSS format for the first step, for the (shuffled) CARE from Section~\ref{sec:shuffle}.
}
	\label{tab:shuffle}
\end{table}

	\section{Concluding remarks}\label{sec:conclusion}
	
%

We have proposed a Krylov subspace method for updating the solution of linear matrix equations whose coefficients are affected by low-rank perturbations. We have shown that our approach can significantly speed up the Newton iteration for solving certain CAREs.  
Moreover, we have designed a divide-and-conquer algorithm for linear matrix equations with hierarchically low-rank coefficients. A theoretical analysis of the structure preservation and of the computational cost has been provided. In the numerical tests, we have verified that our algorithm scales well with the size of the problem and often outperforms existing techniques that rely on approximate sparsity and data sparsity. 

During this work, we encountered two issues that might deserve further investigation. 1. The structure of a stable Lyapunov equation is currently exploited only partially; see Section~\ref{sec:stable}. In particular, it is an open problem to design a divide-and-conquer method that aims directly at the Cholesky factor of the solution and thus preserves its semi-definiteness. 2. As seen in Section~\ref{sec:heat}, it can be advantageous to exploit (approximate) sparsity in the case of well-conditioned equations. It would be interesting to design a variant of the divide-and-conquer method that benefits from sparsity as well.

\begin{paragraph}{Acknowledgments} We thank Jianlin Xia for pointing out~\cite{Xia2012}, and the referees for their careful reading and helpful remarks. \end{paragraph}

\bibliography{anchp,lru}
\bibliographystyle{siamplain}

	\end{document}